\def\Z {\mathbb{Z}}
\def\R {\mathbb{R}}
\def\C {\mathbb{C}}
\def\id{\mathrm{id}}
\def\hc#1{\mathrm{h}_{#1}}
\def\h {\mathrm{H}}
\def\subset{\subseteq}
\renewcommand{\varepsilon}{\epsilon}
\renewcommand{\to}{\nobr\!\xymatrix@R=0cm@C=1.4em{\ar[r] &}\nobr}
\renewcommand{\mapsto}{\!\xymatrix@R=0cm@C=1.4em{\ar@{|->}[r] &}\!}
\renewcommand{\Rightarrow}{\!\xymatrix@R=0cm@C=1.4em{\ar@{=>}[r] &}\!}
\renewcommand{\Leftarrow}{\!\xymatrix@R=0cm@C=1.4em{\ar@{<=}[r] &}\!}
\newcommand{\incl}{\!\xymatrix@R=0cm@C=1.4em{\ar@{^(->}[r] &}\!}
\renewcommand\Leftrightarrow{\!\xymatrix@R=0cm@C=1.4em{\ar@{<=>}[r] &}\!}
\def\erf#1{(\ref{#1})}
\def\brackets#1{\IfStrEq{#1}{-}{}{(#1)}}
\def\subindex#1{\IfStrEq{#1}{-}{}{_{#1}}}
\newcommand{\alxydim}[2]{\begin{aligned}\xymatrix#1{#2}\end{aligned}}
\def\bigset#1#2{\left\lbrace\;\begin{minipage}[c]{#1}\begin{center}#2\end{center}\end{minipage}\;\right\rbrace}
\newlength{\myl}
\def\ddt#1#2#3{\left.\frac{\mathrm{d}^{\IfStrEq{#1}{1}{}{#1}}}{\mathrm{d}#2}\IfStrEq{#2}{#3}{\right.}{\right|_{#3}}}
\def\nobr{~\hspace{-0.26em}}
\let\Oldin\in\renewcommand{\in}{\nobr\Oldin\nobr}
\let\Oldtimes\times\renewcommand{\times}{\nobr\Oldtimes}
\let\Oldotimes\otimes\renewcommand{\otimes}{\nobr\Oldotimes}
\newcommand{\ueins}{{\mathrm{U}}(1)}
\newcommand{\spin}[1]{{\mathrm{Spin}}\brackets{#1}}
\newcommand{\spinc}[1]{{{\mathrm{Spin}}^{{\C}}}\brackets{#1}}
\newcommand{\so}[1]{{\mathrm{SO}}\brackets{#1}}
\def\hom{\mathcal{H}\!om}
\def\homcon{\mathcal{H}\!om^{\!\nabla}\!}
\def\ev{\mathrm{ev}}
\newlength{\widthtmp}
\def\length#1{\settowidth{\widthtmp}{#1}\the\widthtmp}
\def\lli#1{\prescript{}{#1}}
\def\buntech#1#2{\mathcal{B}\hspace{-0.01em}un_{\hspace{0.05em}#1}^{#2}}
\def\buncon#1#2{\buntech{#1}{\nabla}\hspace{-0.05em}\brackets{#2}}
\def\fusbuncon#1#2{\mathcal{F}\!us\buncon{#1}{#2}}
\def\grbtech#1{\mathcal{G}\hspace{-0.06em}r\hspace{-0.06em}b_{\hspace{-0.07em}{#1}}}
\def\grb#1#2{\grbtech{#1}\brackets{#2}}
\def\grbcon#1#2{\grbtech{#1}^{\nabla\!}\brackets{#2}}
\def\exd#1{{#1^{\vee}}}
\def\ev{\mathrm{ev}}
\def\hc#1{\mathrm{h}_{#1}}
\def\pcomp{\nobr\star\nobr}
\def\prev#1{\overline{#1}}
\def\tr{\mathscr{T}}
\def\fusbuncon#1#2{\mathcal{F}\!us\buncon{#1}{#2}}
\def\ptr#1{\tau_{#1}}
\def\struc#1#2{#1\text{-}\mathcal{L}\!i\!f\!t(#2)}
\def\struccon#1#2#3{#1\text{-}\mathcal{L}\!i\!f\!t^{\nabla\!}_{#2}(#3)}
\newcounter{denseversion}
\newcounter{authorcounter}
\newcounter{adresscounter}
\def\title#1{\gdef\@title{#1}}
\def\@title{}
\def\subtitle#1{\gdef\@subtitle{#1}}
\def\@subtitle{}
\def\authortagsused{0}
\def\adresstag#1{\if!#1!\else$^{\;#1\;}$\fi}
\renewcommand{\author}[2][]{
  \stepcounter{authorcounter}
  \if!#1!\else\gdef\authortagsused{1}\fi
  \ifnum\value{authorcounter}=1
    \def\@authorstringa{#2\adresstag{#1}}
    \def\@authorstringb{#2}
    \def\@authorstringc{#2\adresstag{#1}}
  \else
    \g@addto@macro\@authorstringa{\ and #2\adresstag{#1}}
    \g@addto@macro\@authorstringb{\ and #2}
    \g@addto@macro\@authorstringc{\\#2\adresstag{#1}}
  \fi}
\def\@author{\ifnum\value{denseversion}=0\@authorstringa\else\@authorstringb\fi}
\def\@adressstringa{}
\def\@adressstringb{}
\newcommand{\adress}[2][]{
  \stepcounter{adresscounter}
  \ifnum\value{adresscounter}=1
    \g@addto@macro\@adressstringa{\ifnum\authortagsused=0\def\br{\\}\else\def\br{, }\fi\adresstag{#1}#2}
    \g@addto@macro\@adressstringb{\def\br{\\}\adresstag{#1}\parbox[t]{14cm}{#2}}
  \else
    \g@addto@macro\@adressstringa{\\[\bigskipamount]\adresstag{#1}#2}
    \g@addto@macro\@adressstringb{\\[\medskipamount]\adresstag{#1}\parbox[t]{14cm}{#2}}
  \fi}
\def\preprint#1{\gdef\@preprint{#1}}
\def\@preprint{}
\def\keywords#1{\gdef\@keywords{#1}}
\def\@keywords{}
\def\msc#1{\gdef\@msc{#1}}
\def\@msc{}
\def\email#1{
   \gdef\@email{#1}
   \g@addto@macro\@authorstringc{ {\it (#1)}}}
\def\@email{}
\def\dedication#1{\gdef\@dedication{#1}}
\def\@dedication{}
\def\mybaselinestretch#1{\gdef\@mybaselinestretch{#1}}
\def\@mybaselinestretch{}
\def\refname{References}
\renewcommand{\baselinestretch}{\@mybaselinestretch}
\def\denseversion{
  \setcounter{denseversion}{1}
  \newgeometry{left=3cm,right=3cm,top=3cm}
  \mybaselinestretch{1.1}
  \renewcommand{\baselinestretch}{\@mybaselinestretch}
  \normalfont
  \def\possiblelinebreak{}
  \fancyfoot[C]{\itshape{--$\,\,$\thepage$\,\,$--}}}
\newlength{\myparskip}
\newlength{\myproofparskip}
\def\possiblelinebreak{\\}
\renewcommand{\emph}[1]{\def\reserved@a{it}\ifx\f@shape\reserved@a\uline{#1}\else\textit{#1}\fi}
\newcommand{\mytableofcontents}{
   \ifnum\value{denseversion}=0
     \renewcommand{\baselinestretch}{1}
     \normalfont
     \tableofcontents
     \renewcommand{\baselinestretch}{\@mybaselinestretch}
     \normalfont
   \else
     \renewcommand{\baselinestretch}{0.5}
     \normalfont
     \tableofcontents
     \renewcommand{\baselinestretch}{\@mybaselinestretch}
     \normalfont
   \fi}
\newlength{\zeilenlaenge}
\def\putindent#1{
  \settowidth{\zeilenlaenge}{#1}
  \ifnum\zeilenlaenge>\textwidth
    #1
  \else
    \noindent #1
  \fi
}
\def\href#1#2{#2}
\def\kohyp{
  \usepackage{hyperref}
  \hypersetup{
    linktocpage = true,
    pdftitle = {\@title},
    pdfauthor = {\@author},
    pdfkeywords = {\@keywords},    
    bookmarksopen = true,
    bookmarksopenlevel = 1
  }}  
\def\showkeywords{\begin{flushleft}\footnotesize\textbf{Keywords}: \@keywords\end{flushleft}}
\def\showmsc{\begin{flushleft}\footnotesize\textbf{MSC 2010}: \@msc\end{flushleft}}
\newcounter{mythm}[subsection]
\newcounter{mainthm}
\def\setsecnumdepth#1{
  \setcounter{secnumdepth}{#1}
  \setcounter{mythm}{0}
  \ifnum \c@secnumdepth >0
    \ifnum \c@secnumdepth >1
      \def\themythm{\thesubsection.\arabic{mythm}}
      \numberwithin{equation}{subsection}
      \renewcommand\theequation{\thesubsection.\arabic{equation}}
    \else
      \def\themythm{\thesection.\arabic{mythm}}
      \numberwithin{equation}{section}
      \renewcommand\theequation{\thesection.\arabic{equation}}
    \fi
  \else
    \def\themythm{\arabic{mythm}}
  \fi}
\newenvironment{mythmenv}{\strut\ \setlength{\parskip}{\myproofparskip}}{\setlength{\parskip}{\myparskip}}
\newlength{\mythmskip}
\newlength{\mythmtopskip}
\newtheoremstyle{mythmstylea}{\mythmtopskip}{\mythmskip}{\it}{}{\bf}{.}{0em}{}
\newtheoremstyle{mythmstyleb}{\mythmtopskip}{\mythmskip}{}{}{\bf}{.}{0em}{}
\theoremstyle{mythmstylea}
\newtheorem{mytheorem}[mythm]{\nameTheorem}
\newtheorem{mydefinition}[mythm]{\nameDefinition}
\newtheorem{mycorollary}[mythm]{\nameCorollary}
\newtheorem{myproposition}[mythm]{\nameProposition}
\newtheorem{mylemma}[mythm]{\nameLemma}
\newtheorem{mymaintheorem}[mainthm]{\nameTheorem}
\newtheorem{mymaincorollary}[mainthm]{\nameCorollary}
\newtheorem{mymainproposition}[mainthm]{\nameProposition}
\newtheorem{mymaindefinition}[mainthm]{\nameDefinition}
\theoremstyle{mythmstyleb}
\newtheorem{myremark}[mythm]{\nameRemark}
\newtheorem{myproblem}[mythm]{\nameProblem}
\newtheorem{myexample}[mythm]{\nameExample}
\newtheorem{myexercise}[mythm]{\nameExercise}
\newenvironment{theorem}[1][]{\begin{mytheorem}[#1]\begin{mythmenv}}{\end{mythmenv}\end{mytheorem}}
\newenvironment{definition}[1][]{\begin{mydefinition}[#1]\begin{mythmenv}}{\end{mythmenv}\end{mydefinition}}
\newenvironment{corollary}[1][]{\begin{mycorollary}[#1]\begin{mythmenv}}{\end{mythmenv}\end{mycorollary}}
\newenvironment{lemma}[1][]{\begin{mylemma}[#1]\begin{mythmenv}}{\end{mythmenv}\end{mylemma}}
\newenvironment{remark}[1][]{\begin{myremark}[#1]\begin{mythmenv}}{\end{mythmenv}\end{myremark}}
\newenvironment{maintheorem}[1]{\begin{mymaintheorem}\begin{mythmenv}}{\end{mythmenv}\end{mymaintheorem}}
\renewenvironment{proof}[1][\nameProof]{\noindent #1. \begin{mythmenv}}{\hphantom{$\square$}\hfill$\square$\end{mythmenv}\medskip}
\def\mytitle{}
\def\zmptitle{
  \begin{tabular}{cc}
    \begin{minipage}[c]{0.4\textwidth}
      \begin{flushleft}
        \includegraphics[width=110pt]{../../tex/zmp}
      \end{flushleft}  
    \end{minipage}&
    \begin{minipage}[c]{0.55\textwidth}
      \begin{flushright}
      {\small\sf\@preprint}
      \end{flushright}
    \end{minipage}
  \end{tabular}
  \vskip 2cm}
\def\maketitle{
  \setlength{\parskip}{\myparskip}  
  \newpage
  \noindent
  \mytitle
  \begin{center}
    \LARGE\@title\\
    \if!\@subtitle!\else\smallskip\LARGE\@subtitle\\\fi
    \bigskip
    \if!\@author!\else\bigskip\large\@author\\\fi
    \ifnum\value{denseversion}=0
      \if!\@adressstringa!\else\bigskip\normalsize\@adressstringa\\\fi
      \if!\@email!\else\ifnum\value{authorcounter}=1\bigskip\normalsize\textit{\@email}\\\else\fi\fi
    \else
    \fi
    \if!\@dedication!\else\bigskip\normalsize{\@dedication}\\\fi
  \end{center}
  \ifnum\value{denseversion}=0\vskip 1.5cm\else\vskip0.5cm\fi
  \if!\@draft!\else\thispagestyle{empty}\fi}
\def\kobiburl#1{
   \IfBeginWith
     {#1}
     {http://arxiv.org/abs/}
     {\kobibarxiv{#1}}
     {\kobiblink{#1}}}
\def\kobibarxiv#1{\href{#1}{\texttt{[arxiv:\StrGobbleLeft{#1}{21}]}}}
\def\kobiblink#1{
  \StrSubstitute{#1}{_}{\underline{\;\;}}[\mylink]
  \StrSubstitute{\mylink}{&}{\&}[\mylink]
  \StrSubstitute{\mylink}{/}{/\allowbreak}[\mylink]
  \newline Available as: \mbox{\;}
  \href{#1}{\texttt{\mylink}}}
\def\nolinks{\def\kobiblink##1{}}
\def\kobib#1{
  \begin{raggedright}
  \ifnum\value{denseversion}=0\else\small\fi
  \Oldbibliography{#1/kobib}

  \end{raggedright}
  \ifnum\value{denseversion}=0\else
      \noindent
      \if!\@authorstringc!\else
        \ifnum\authortagsused=0\ifnum\value{authorcounter}>1\normalsize\@authorstringc\\[\medskipamount]\else\fi\else\normalsize\@authorstringc\\[\medskipamount]\fi
      \fi
      \if!\@adressstringb!\else\normalsize\@adressstringb\\{}\fi
      \ifnum\authortagsused=0
        \ifnum\value{authorcounter}=1
          \if!\@email!\else\linebreak\normalsize\textit{\@email}\\{}\fi
        \else
        \fi
      \else
      \fi
  \fi
  }
\let\Oldbibliography
\def\bibliography#1{
  \begin{raggedright}
  \ifnum\value{denseversion}=0\else\small\fi
  \Oldbibliography{#1}
  \end{raggedright}
  \ifnum\value{denseversion}=0\else
      \noindent
      \if!\@authorstringc!\else
        \ifnum\authortagsused=0\ifnum\value{authorcounter}>1\normalsize\@authorstringc\\[\medskipamount]\else\fi\else\normalsize\@authorstringc\\[\medskipamount]\fi
      \fi
      \if!\@adressstringb!\else\normalsize\@adressstringb\\{}\fi
      \ifnum\authortagsused=0
        \ifnum\value{authorcounter}=1
          \if!\@email!\else\linebreak\normalsize\textit{\@email}\\{}\fi
        \else
        \fi
      \else
      \fi
  \fi
}
\newenvironment{commentfigure}{\begin{comment}}{\end{comment}}
\newenvironment{sidewayscommentfigure}{\begin{minipage}}{\end{minipage}}
\def\draft#1#2#3#4{
  \ifnum#4=0
    \def\showcomments{ - Comments are not displayed}
  \else
    \renewenvironment{comment}{\begin{list}{}{\rightmargin=1cm\leftmargin=1cm}\item\sf\begin{small}}{\end{small}\end{list}}

    \def\showcomments{ - Comments are displayed}
  \fi
  \gdef\@draft{DRAFT - Version #1 - Last edited on #2 - Last edited by #3\showcomments}
  \fancyhead[C]{\footnotesize\tt\textcolor{red}{\@draft}}}
\def\@draft{}
\def\quot#1{``#1''}
\def\quand{\quad\text{ and }\quad}
\def\quomma{\quad\text{, }\quad}
\def\quith{\quad\text{ with }\quad}
\def\nameTheorem{Theorem}
\def\nameDefinition{Definition}
\def\nameCorollary{Corollary}
\def\nameProposition{Proposition}
\def\nameLemma{Lemma}
\def\nameRemark{Remark}
\def\nameProblem{Problem}
\def\nameExample{Example}
\def\nameExercise{Exercise}
\def\nameProof{Proof}
\title{A Loop Space Formulation for Geometric Lifting Problems}
\author{Konrad Waldorf}
\email{waldorf@math.berkeley.edu}
\keywords{lifting problem, bundle gerbe, transgression, loop space}
\begin{document}


\def\hat#1{\widehat{#1}}
\def\fussec#1#2{\Gamma^{f\!u\!s}_{#1}(#2)}

\maketitle

\begin{abstract}
We review and then combine two aspects of the theory of bundle gerbes.
The first concerns  lifting bundle gerbes and connections on those, developed by Murray and Gomi. Lifting gerbes represent obstructions against extending the structure group of a principal bundle. The second  is the transgression of gerbes to loop spaces, initiated by Brylinski and McLaughlin and with recent contributions of the author. Combining  these two aspects, we obtain a new formulation of  lifting problems  in terms of geometry on the loop space. Most prominently, our formulation explains the relation between (complex) spin structures on a Riemannian manifold and  orientations of its loop space.
\end{abstract}


\nolinks

\setsecnumdepth{1}

\section{Introduction and Statement of the Result}

In their seminal work \cite{brylinski4,brylinski5} on the geometry of line bundles over loop spaces, Brylinski and McLaughlin encounter an interesting \quot{product} such line bundles can be endowed with --- we are going to call it \emph{fusion product}.  The idea of a  fusion product on a line bundle $\mathscr{L}$ over the loop space is that it provides for two loops $\tau_1$ and $\tau_2$  that are smoothly composable to a third loop $\tau_2 \pcomp \tau_1$
a linear isomorphism
\begin{equation*}
\lambda: \mathscr{L}_{\tau_1} \otimes \mathscr{L}_{\tau_2} \to \mathscr{L}_{\tau_2 \pcomp \tau_1}\text{,}
\end{equation*}  
where $\mathscr{L}_{\tau}$ denotes the fibre of $\mathscr{L}$ over a loop $\tau$. 
A complete and slightly modified definition will be given later (Definition \ref{def:fusionproduct}).

\def\adjust#1{\!\!\!\begin{tabular}{c}$#1$\end{tabular}\!\!\!}

In this note we construct examples of  bundles with fusion products  in the context of geometric lifting problems. 
A \emph{lifting problem} is posed by specifying  a central extension
\begin{equation*}
\alxydim{}{1 \ar[r] & A \ar[r] & \adjust{\hat G}  \ar[r] & G \ar[r] & 1}
\end{equation*} 
of Lie groups and a principal $G$-bundle $P$ over a smooth manifold $M$. A solution for the lifting problem is a principal $\hat G$-bundle $\hat P$ over $M$ together with an equivariant bundle map from $\hat P$ to $P$, in the following called  \emph{$\hat G$-lift of $P$}. 
A \emph{geometric} lifting problem is one where  $P$ carries a connection, and a \emph{geometric $\hat G$-lift} $\hat P$ includes a connection on $\hat P$ that is compatible with the given one in a certain way. As we are going to explain later in more detail, geometric $\hat G$-lifts have a \emph{scalar curvature}: a 2-form on $M$ with values in the Lie algebra $\mathfrak{a}$ of the central Lie group $A$.

We establish a relation between geometric lifting problems and the geometry of bundles over the  loop space $LM := C^{\infty}(S^1,M)$ by constructing a principal $A$-bundle $\mathscr{L}_P$ over $LM$ from  a geometric lifting problem posed by a  principal $G$-bundle $P$ with connection over $M$.
Basically, the fibre of $\mathscr{L}_P$ over a loop $\tau\in LM$ consists of all geometric  $\hat G$-lifts of the pullback $\tau^{*}P$. We will see that every \quot{global} geometric $\hat G$-lift $\hat P$ defines -- by restricting it to  loops -- a smooth section $\sigma_{\hat P}: LM \to \mathscr{L}_P$.  Sections that can be obtained in this way turn out to be very particular.

The bundle $\mathscr{L}_P$ fits well into the context of the work of Brylinski and McLaughlin: it  comes with a fusion product and with a connection. We will see that the section $\sigma_{\hat P}$ is compatible with this additional structure. Firstly, it \emph{preserves} the fusion product. Secondly, its \emph{curvature} (i.e. the pullback of the connection of $\mathscr{L}_P$ to $LM$) coincides with the transgression of the scalar curvature of $\hat P$ (i.e. the pullback along the evaluation map $\ev: S^1 \times LM \to M$, followed by integration over the fibre $S^1$). We show that these two properties characterize those sections of $\mathscr{L}_P$ that come from geometric $\hat G$-lifts of $P$, and so establish the following loop space formulation for geometric lifting problems.

\begin{maintheorem}{A}
\label{main}
Let $M$ be a connected smooth manifold and $P$ be a principal $G$-bundle with connection over $M$, let $\rho \in \Omega^2(M,\mathfrak{a})$ and  $L\rho \in \Omega^1(LM,\mathfrak{a})$ denote its transgression. Then, the assignment $\hat P \mapsto \sigma_{\hat P}$ defines a bijection
\begin{equation*}
\bigset{3.9cm}{Equivalence classes of geometric $\hat G$-lifts of $P$ with scalar curvature $\rho$} \cong 
\bigset{4.5cm}{Fusion-preserving sections of $\mathscr{L}_P$ with curvature $-L\rho$}\text{.}
\end{equation*}
\end{maintheorem}

The definition of the bundle $\mathscr{L}_P$, the properties of the sections $\sigma_{\hat P}$, and the proof of Theorem \ref{main} are all obtained using the theory of \emph{bundle gerbes}. In Section \ref{sec:lifting} of this note  we review  lifting gerbes and connections on those following Murray and Gomi \cite{murray,gomi3}, respectively. The main result of Section \ref{sec:lifting}, Theorem \ref{th:lifting}, gives a complete formulation of geometric lifting problems in terms of bundle gerbes. In Section \ref{sec:transgression} we review the transgression of bundle gerbes to loop spaces, developed by Brylinski and McLaughlin \cite{brylinski4,brylinski5,brylinski1}, and include some recent contributions of the author \cite{waldorf10}. The main result of Section \ref{sec:transgression}, Theorem \ref{th:transgression}, is an equivalence between the category of bundle gerbes with connection over $M$ and a category of principal bundles with fusion products and connections over $LM$. In Section \ref{sec:proof} we put the two pieces together: we define the bundle $\mathscr{L}_P$ to be the transgression of a lifting gerbe, and combine the  equivalences of Theorems \ref{th:lifting} and \ref{th:transgression} to a one-line-proof of Theorem \ref{main}.

The remaining two sections of this note are complementary. In Section \ref{sec:alternative} we provide an alternative construction of the bundle $\mathscr{L}_P$ which is elementary in the sense that it does not use any gerbe theory (Theorem \ref{prop:transgression}). We will also construct the fusion product in that context --- so far as I know this is  the first elementary non-trivial example of a fusion product. In Section \ref{sec:spin} we apply Theorem \ref{main} to spin structures and complex spin structures on Riemannian manifolds. This is interesting because the bundle $\mathscr{L}_P$ has a nice interpretation as the \emph{orientation bundle} of $LM$.  Theorem \ref{main} reduces in the spin case to previously known results of Atiyah \cite{atiyah2} and Stolz-Teichner \cite{stolz3} (Corollary \ref{co:spin}), while in the complex spin case it provides a new description of complex spin structures in terms of certain loop space orientations (Corollary \ref{co:spinc}).

One line for further research could be to understand the role of fusion products in the case that the underlying manifold is a Lie group. There, it  can be seen as an additional structure for loop group extensions, and it is expected that it is responsible for the fusion of positive energy representations, thus the terminology. It would be interesting to have a geometrical formulation of  fusion in the setting of bundle gerbes and bundle gerbe modules, as initiated in \cite{carey6}.

\paragraph{Acknowledgements.} 
I gratefully acknowledge  a Feodor-Lynen scholarship, granted by the Alex\-an\-der von Hum\-boldt Foundation. I would also like to thank  the organizers of Alan Carey's 60th birthday conference for  inviting me to talk and to write this contribution to its proceedings.

\section{Lifting Bundle Gerbes}

\label{sec:lifting}

In this section we review (and slightly complete) the theory of lifting bundle gerbes and connections on them. The setup is a central extension
\begin{equation}
\label{ce}
\alxydim{}{1 \ar[r] & A \ar[r] & \adjust{\hat G} \ar[r]^{t} & G \ar[r] & 1}
\end{equation} 
of Lie groups, and a principal $G$-bundle $P$ over a smooth manifold $M$. A \emph{$\hat G$-lift of $P$} is a principal $\hat G$-bundle $\hat P$ over $M$ together with a bundle map $f: \hat P \to P$ satisfying $f(\hat p \cdot\hat g) = f(\hat p) \cdot t(\hat g)$ for all $\hat p \in \hat P$ and $\hat g\in \hat G$. $\hat G$-lifts of $P$ form a category $\struc {\hat G}P$. The existence of $\hat G$-lifts is obstructed by a class  $\xi_P \in \h^2(M,\underline{A})$ that is obtained by locally lifting a \v Cech cocycle for $P$ and then measuring the error.

The idea of realizing the obstruction class $\xi_P$ geometrically has been proposed by Brylinski \cite{brylinski1} in terms of Dixmier-Douady sheaves of groupoids. Murray has  adapted this idea to bundle gerbes, where it becomes particularly elegant \cite{murray}. I will assume in the following that the reader is a bit familiar with bundle gerbes --- for instance, the papers \cite{carey2,schweigert2,murray3} contain introductions.

Associated to the given bundle $P$ is the following bundle gerbe $\mathcal{G}_P$ over $M$, called the \emph{lifting gerbe}. Its surjective submersion is the bundle projection $\pi: P \to M$. We are going to denote its $k$-fold fibre product by $P^{[k]}$, and by $\pi_{i_1...i_k}: P^{[j]} \to P^{[k]}$ the projections to the indexed factors. Over $P^{[2]}$, the lifting gerbe has the  principal $A$-bundle $Q := g^{*}\hat G$, obtained by regarding $\hat G$ as a principal $A$-bundle over $G$, and pulling it back along the map $g:P^{[2]} \to G$  defined by $p\cdot g(p,p') = p'$. Finally, the multiplication of $\hat G$ defines a bundle gerbe product, i.e. a bundle isomorphism
\begin{equation}
\label{bgprod}
\mu: \pi_{12}^{*}Q \otimes \pi_{23}^{*}Q \to \pi_{13}^{*}Q
\end{equation}
over $P^{[3]}$ that is associative over $P^{[4]}$. The characteristic class of the lifting gerbe $\mathcal{G}_P$ is the obstruction class $\xi_P$ \cite{murray}. Thus, $\hat G$-lifts of $P$ exist if and only if $\mathcal{G}_P$ is trivializable.

This statement can be slightly improved by taking morphisms between bundle gerbes into account. We recall that bundle gerbes over $M$ form a 2-category $\grb AM$ \cite{stevenson1}. The Hom-category between two bundle gerbes $\mathcal{G}$ and $\mathcal{H}$ is denoted $\hom(\mathcal{G},\mathcal{H})$. A \emph{trivialization} of $\mathcal{G}$ is a 1-morphism $\mathcal{T}: \mathcal{G} \to \mathcal{I}$, where $\mathcal{I}$ denotes the trivial bundle gerbe \cite{waldorf1}.
In detail, a trivialization of our lifting gerbe $\mathcal{G}_P$ is a principal $A$-bundle $T$ over $P$ together with a bundle isomorphism
\begin{equation}
\label{triviso}
\kappa: Q \otimes \pi_2^{*}T \to \pi_1^{*}T
\end{equation}
over $P^{[2]}$ satisfying a  compatibility condition with the bundle gerbe product $\mu$.

It is a nice exercise to check that $\hat P := T$ with the projection $T \to Y \to  M$ and  the $\hat G$-action $\hat p \cdot \hat g := \kappa(\hat g^{-1} \otimes \hat p)$ is a $\hat G$-lift of $P$. 
Even better, we have

\begin{theorem}
\label{lift}
Let $P$ be a principal $G$-bundle over $M$. Then, the above  construction defines an equivalence of categories,
\begin{equation*}
\hom(\mathcal{G}_P,\mathcal{I}) \cong \struc {\hat G}P\text{.}
\end{equation*}
\end{theorem}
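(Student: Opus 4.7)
The plan is to upgrade the object-level construction sketched in the paragraph before the theorem to a functor $F: \hom(\mathcal{G}_P,\mathcal{I}) \to \struc{\hat G}P$, and to write down an explicit inverse. On objects, $F$ sends a trivialization $(T,\kappa)$ to $\hat P := T$ with projection $T \to P \to M$, bundle map $f: T \to P$ the given projection, and $\hat G$-action $\hat p \cdot \hat g := \kappa(\hat g^{-1} \otimes \hat p)$. On 2-morphisms, which by the standard description of morphisms in $\grb AM$ between two trivializations over the same submersion $P$ are simply bundle morphisms $T \to T'$ over $P$ intertwining $\kappa$ and $\kappa'$, the functor $F$ keeps the same underlying map, now viewed as a morphism of $\hat G$-lifts.

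First I would verify that $F$ really lands in $\struc{\hat G}P$. Freeness of the $\hat G$-action on $\hat P$ follows from freeness of $A$ on $T$ together with freeness of $G$ on $P$, and fibrewise transitivity of $\hat P \to M$ reduces to the fact that $\hat G \to G$ is itself a principal $A$-bundle. Smoothness and local triviality are inherited from those of $T$ and $\kappa$. Equivariance $f(\hat p \cdot \hat g) = f(\hat p) \cdot t(\hat g)$ is built into the convention $p \cdot g(p,p') = p'$ used to define the bundle $Q$. Associativity $\hat p \cdot (\hat g_1 \hat g_2) = (\hat p \cdot \hat g_1) \cdot \hat g_2$ translates, via the definition of the action, into the compatibility of $\kappa$ with the bundle gerbe product $\mu$ at the point $(p \cdot g_1 g_2, p \cdot g_1, p) \in P^{[3]}$; and agreement of the restricted action with the given $A$-action on $T$ follows because $\mu$ reduces to the $A$-action on $\hat G$ whenever one factor lies in $A$.

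Next I would construct an inverse functor $G: \struc{\hat G}P \to \hom(\mathcal{G}_P,\mathcal{I})$ by reading each piece of data backwards. Given a $\hat G$-lift $(\hat P, f)$, the map $f$ exhibits $T := \hat P$ as a principal $A$-bundle over $P$ via the inclusion $A \hookrightarrow \hat G$, and over $(p_1,p_2) \in P^{[2]}$ the formula $\kappa(\hat g \otimes \hat p) := \hat p \cdot \hat g^{-1}$ defines a bundle isomorphism $Q \otimes \pi_2^{*}T \to \pi_1^{*}T$ whose compatibility with $\mu$ is precisely associativity of the $\hat G$-action read in reverse. A $\hat G$-equivariant bundle map $\hat P \to \hat P'$ over $M$ covering the identity of $P$ automatically intertwines the associated $\kappa$ and $\kappa'$, so $G$ is also well-defined on morphisms.

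Finally I would check that $F \circ G$ and $G \circ F$ are the identity on the nose: underlying spaces, projections and bundle maps are unchanged throughout, and the two translations $\hat p \cdot \hat g \leftrightarrow \kappa$ are strict inverses of each other. The only step requiring genuine care is the bookkeeping of the inversion $\hat g \mapsto \hat g^{-1}$, which appears in both directions and must be lined up with the convention $p \cdot g(p,p') = p'$ and with the definition of $\mu$ via multiplication in $\hat G$; beyond that, the argument is a routine unwinding of definitions, and I expect no serious obstacle.
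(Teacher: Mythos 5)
Your proposal is correct and follows exactly the route the paper intends: the paper gives no proof of Theorem \ref{lift} beyond declaring the object-level construction ($\hat P := T$ with $\hat p \cdot \hat g := \kappa(\hat g^{-1}\otimes\hat p)$) \quot{a nice exercise}, and you have simply carried out that exercise, extended it to morphisms, and written down the evident inverse functor. The points you single out as needing care --- associativity of the action being the compatibility of $\kappa$ with $\mu$ at $(p\cdot g_1g_2,\, p\cdot g_1,\, p)\in P^{[3]}$, and the bookkeeping of inverses against the convention $p\cdot g(p,p')=p'$ --- are indeed the only nontrivial checks, so there is nothing to add.
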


Lifting gerbes become even more interesting when connections are taken into account. For preparation, we  look at the Lie algebra extension
\begin{equation}
\label{lace}
\alxydim{}{0 \ar[r] & \adjust{\mathfrak{a}} \ar[r] & \adjust{\widehat{\mathfrak{g}}} \ar[r]^{t_{*}} & \adjust{\mathfrak{g}} \ar[r] & 0}
\end{equation}
 associated to the central extension \erf{ce}.
Gomi  constructed a connection on the lifting bundle gerbe $\mathcal{G}_P$ from a given connection $\eta$ on $P$ \cite{gomi3}. His construction depends on two further parameters:
\begin{enumerate}[(a)]
\item 
The first parameter is a \emph{split} $\sigma$ of the Lie algebra extension \erf{lace}, i.e. a linear map $\sigma: \mathfrak{g} \to \widehat{\mathfrak{g}}$ such that $t_{*} \circ \sigma = \id_{\mathfrak{g}}$.
As usual, one can measure the failure of $\sigma$ to be a Lie algebra homomorphism by a 2-cocycle $\omega: \mathfrak{g} \times \mathfrak{g} \to \mathfrak{a}$. Alternatively, one can lift the adjoint action of $G$ on $\mathfrak{g}$  to $\widehat{\mathfrak{g}}$, and then measure the failure of $\sigma$ to intertwine the two, resulting in the map
\begin{equation*}
Z: G \times \mathfrak{g} \to \mathfrak{a}
\quith Z(g,X) := \mathrm{Ad}_g^{-1}(\sigma(X))-\sigma(\mathrm{Ad}_g^{-1}(X))\text{.} 
\end{equation*} 

\item
The second parameter is a \emph{reduction} of $P$ with respect to the  split $\sigma$: a smooth map $r: P \times \mathfrak{g} \to \mathfrak{a}$ that is linear in the second argument and satisfies
\begin{equation}
\label{idr}
r(p,X) = r(p\cdot g,\mathrm{Ad}_{g}^{-1}(X)) - Z(g,X)
\end{equation}
for all $p\in P$, $X\in \mathfrak{g}$ and $g\in G$.

\end{enumerate}
Gomi shows that  choices of a split and a reduction with respect to it always exist, and we fix such choices for the rest of this note. 

The split $\sigma$ defines a connection $\nu$ on the principal $A$-bundle $\hat G$ over $G$, given by the formula $\nu := \theta - \sigma(t^{*}\theta) \in \Omega^1(\hat G,\hat{\mathfrak{g}})$, where $\theta$ stands for the left-invariant Maurer-Cartan form (on $\hat G$ and $G$, respectively). On the principal $A$-bundle $Q = g^{*}\hat G$ over $P^{[2]}$, we shift the pullback of the connection $\nu$ by a 1-form on $P^{[2]}$:
\begin{equation*}
\lambda_{\eta} := g^{*}\nu + Z(g,\pi_1^{*}\eta)\text{.}
\end{equation*} 
This connection on $Q$ makes the bundle gerbe product \erf{bgprod} a connection-preserving bundle morphism  \cite[Theorem 5.6]{gomi3}, and thus qualifies as the first part of a bundle gerbe connection.
It remains to define the \emph{curving}. Using the reduction $r$, we set
\begin{equation}
\label{curvingform}
C_{\eta} := -\frac{1}{2}\omega(\eta \wedge \eta) +r(\mathrm{curv}(\eta)) \in \Omega^2(P,\mathfrak{a})\text{.}
\end{equation}
The required identity for curvings,
\begin{equation}
\label{curving}
\mathrm{curv}(\lambda_{\eta}) = \pi_2^{*}C_{\eta} - \pi_1^{*}C_{\eta}\text{,}
\end{equation}
is satisfied \cite[Theorem 5.9]{gomi3}. This completes the definition of a connection on the lifting bundle gerbe $\mathcal{G}_P$. Next we  explain what this connection is good for.

First we recall what \quot{compatible connections} on $\hat G$-lifts of $P$ are. 
If $f: \hat P\to P$ is a $\hat G$-lift of $P$, a connection $\hat \eta\in \Omega^1(\hat P,\widehat{\mathfrak{g}})$ on $\hat P$ is called \emph{compatible} with the given connection $\eta$ if $f^{*}\eta = t_{*}(\hat\eta)$.
Pairs of a $\hat G$-lift $\hat P$ and a compatible connection $\hat \eta$ are called \emph{geometric $\hat G$-lifts of $P$}.
With our fixed choices of the split $\sigma$ and the reduction $r$ one can assign to any compatible connection $\hat\eta$ a scalar curvature
\begin{comment}
\footnote{One could put a minus in formula \erf{scurv} in order to avoid the minus in Theorem \ref{th:lifting}. There are reasons not to do that: firstly, formula \erf{scurv} gives the correct sign in case of complex spin structures (see Remark \erf{scurv}), and secondly, in Section \ref{sec:transgression} we will encounter another minus, so that all together there is no minus in Theorem \ref{main}.}
\end{comment}
\begin{equation}
\label{scurv}
\mathrm{scurv}(\hat\eta) := r_{\sigma}(\mathrm{curv}(\hat\eta)) \in \Omega^2(P,\mathfrak{a})\text{,}
\end{equation}
where $r_{\sigma}: \hat P \times \hat {\mathfrak{g}} \to \mathfrak{a}$ is defined by $r_{\sigma}(p,X) := X_{\mathfrak{a}} - r(f(p),X_\mathfrak{g})$ using the decomposition $X=X_{\mathfrak{a}} + \sigma(X_{\mathfrak{g}})$ of $\hat {\mathfrak{g}}$ determined by $\sigma$. The scalar curvature \erf{scurv} descends to an $\mathfrak{a}$-valued 2-form on $M$. We are interested in geometric $\hat G$-lifts with  fixed scalar curvature $\rho$; those form a category  $\struccon {\hat G}\rho P$.

In the spirit of Theorem \ref{lift} we want to compare the category $\struccon {\hat G}\rho P$ with a category of trivializations of the lifting gerbe $\mathcal{G}_P$. We recall that bundle gerbes with connection form again a 2-category $\grbcon AM$ \cite{stevenson1,waldorf1}. If $\mathcal{G}$ and $\mathcal{H}$ are bundle gerbes with connections, the \quot{connection-preserving} 1-morphisms\footnote{We put that into quotes since being connection-preserving is \emph{structure}, not a \emph{property}.} are the objects of the Hom-category $\homcon(\mathcal{G},\mathcal{H})$.
Connections on the trivial bundle gerbe $\mathcal{I}$ are given by 2-forms $\rho\in \Omega^2(M,\mathfrak{a})$. We denote the trivial bundle gerbe with connection $\rho$ by $\mathcal{I}_{\rho}$. \emph{Flat} trivializations of $\mathcal{G}$ are the objects of the category $\homcon(\mathcal{G},\mathcal{I}_0)$. 

Gomi proves \cite[Theorem 3.9 and Corollary 5.13]{gomi3} that $\mathcal{G}_P$ has a \emph{flat} trivialization if and only if there exist a geometric $\hat G$-lift with  \emph{vanishing} scalar curvature.  We need the following generalization to arbitrary scalar curvature, whose proof we leave as an exercise in Lie-algebra valued differential forms. 

\begin{theorem}
\label{th:lifting}
Let $P$ be a principal $G$-bundle over $M$ with connection, and  $\rho\in \Omega^2(M,\mathfrak{a})$. Then, the equivalence of Theorem \ref{lift} extends to an equivalence of categories
\begin{equation*}
\homcon(\mathcal{G}_P,\mathcal{I}_{\rho}) \cong \struccon{\hat G}{-\rho}{P}\text{.} \end{equation*}
\end{theorem}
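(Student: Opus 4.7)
My plan is to extend the equivalence of Theorem~\ref{lift} by equipping both sides with compatible connection data. On the left, a connection-preserving trivialization $\mathcal{T}\colon \mathcal{G}_P \to \mathcal{I}_\rho$ amounts to a principal $A$-bundle $T$ over $P$ with connection $\hat\eta_T \in \Omega^1(T,\mathfrak{a})$, a connection-preserving isomorphism $\kappa$ as in \erf{triviso}, and the trivialization-curvature condition $\mathrm{curv}(\hat\eta_T) = C_\eta - \pi^*\rho$ on $P$. On the right, a geometric $\hat G$-lift with scalar curvature $-\rho$ is the underlying bundle $\hat P = T$ from Theorem~\ref{lift} together with a $\hat G$-connection $\hat\eta \in \Omega^1(\hat P, \hat{\mathfrak{g}})$ satisfying $f^*\eta = t_*\hat\eta$ and $\mathrm{scurv}(\hat\eta) = -\rho$.

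The functor I would construct sends such a trivialization to the 1-form $\hat\eta := \hat\eta_T + \sigma(f^*\eta)$, viewed on $T = \hat P$ with values in $\hat{\mathfrak{g}} = \mathfrak{a} \oplus \sigma(\mathfrak{g})$; compatibility $t_*\hat\eta = f^*\eta$ is then automatic from $t_*\circ\sigma = \id$ and $t_*|_\mathfrak{a} = 0$. The inverse functor sends a geometric lift $(\hat P,\hat\eta)$ to the $A$-connection $\hat\eta_T := \hat\eta - \sigma(f^*\eta)$. On 2-morphisms, the equivalence reduces to the observation that under $\hat P = T$ a $\hat G$-equivariant bundle map intertwining the lifting maps is the same as an $A$-bundle map intertwining $\kappa$, and connection-preservation on one side corresponds to connection-preservation on the other via the formula $\hat\eta = \hat\eta_T + \sigma(f^*\eta)$.

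To match scalar curvature with the trivialization-curvature condition, I would decompose $\mathrm{curv}(\hat\eta)$ using centrality of $\mathfrak{a}$ in $\hat{\mathfrak{g}}$ and the cocycle identity $[\sigma(X),\sigma(Y)] = \sigma[X,Y] + \omega(X,Y)$:
\begin{equation*}
\mathrm{curv}(\hat\eta) = d\hat\eta_T + \tfrac{1}{2}\omega(f^*\eta \wedge f^*\eta) + \sigma(f^*\mathrm{curv}(\eta)).
\end{equation*}
Applying $r_\sigma$ (reading off the $\mathfrak{a}$-part and subtracting $r$ of the $\mathfrak{g}$-part) and invoking the definition \erf{curvingform} of $C_\eta$ gives $\mathrm{scurv}(\hat\eta) = d\hat\eta_T - f^*C_\eta$. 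The trivialization condition, rewritten on $T$ as $d\hat\eta_T = f^*(C_\eta - \pi^*\rho)$, then delivers $\mathrm{scurv}(\hat\eta) = -f^*\pi^*\rho$, which descends to $-\rho$ on $M$. Reversing the computation covers the converse direction.

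The principal obstacle, and the only non-routine point, is showing that $\hat\eta := \hat\eta_T + \sigma(f^*\eta)$ is really a $\hat G$-equivariant connection rather than merely an $A$-equivariant 1-form. This is precisely Gomi's argument in \cite[Theorem~5.12]{gomi3}: the equivariance of $\hat\eta$ under the $\sigma(\mathfrak{g})$-part of the $\hat G$-action reduces on the nose to the defining identity \erf{idr} for the reduction $r$ combined with the connection-preserving condition on $\kappa$, and neither ingredient can be dispensed with. Everything downstream is the Lie-algebra-valued bookkeeping exercise the author alludes to, with sign conventions tuned so that the scalar curvature comes out as $-\rho$ rather than $+\rho$.
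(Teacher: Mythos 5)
Your proposal is correct and follows exactly the route the paper intends: the paper gives no proof of Theorem \ref{th:lifting}, explicitly leaving it as ``an exercise in Lie-algebra valued differential forms'' on top of Gomi's flat case, and your write-up is precisely that exercise carried out along Gomi's lines (transport the connection via $\hat\eta = \hat\eta_T + \sigma(f^{*}\eta)$, invoke \cite[Theorem 5.12]{gomi3} for $\hat G$-equivariance, and track the curvature identity $\mathrm{scurv}(\hat\eta) = \mathrm{curv}(\hat\eta_T) - f^{*}C_{\eta}$). Your hedge about sign conventions is harmless: the requirement that $\mathrm{scurv}(\hat\eta)$ descend to $M$ forces the sign in the trivialization-curvature condition and the sign of $\hat\eta_T$ in $\hat\eta$ to flip together, and either consistent choice yields $-\rho$, in agreement with the statement.
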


\begin{comment}
If a trivialization $\mathcal{T}: \mathcal{G}_P \to \mathcal{I}_{\rho}$ is given, with a principal $A$-bundle $\chi:T \to P$ with connection $\omega$ and a bundle isomorphism $\kappa$, a connection on $\hat P := T$ is defined by
\begin{equation*}
\hat\eta := \omega + \sigma(\chi^{*}\eta) \in \Omega^1(\hat P,\hat{\mathfrak{g}})\text{.}
\end{equation*}
It is clear the compatibility condition $t_{*}(\hat\eta) = f^{*}\eta$ is satisfied, and the proof that $\hat\eta$ is a connection uses the definition of $\lambda_{\eta}$ and the relation between $Z$ and $\sigma$. Its scalar curvature is $-\rho$. 

Conversely, if $\hat\eta$ is a compatible connection on $\hat P$ with scalar curvature $-\rho$, then $\omega := \hat\eta - \sigma(\chi^{*}\eta) \in \Omega^1(T,\mathfrak{a})$ is the desired connection on $T$. 
\end{comment}

\section{Transgression and Fusion Bundles}

\label{sec:transgression}

In this section we discuss a relation between bundle gerbes over $M$ and bundles with fusion products over $LM$. Employing this relation for the lifting bundle gerbe $\mathcal{G}_P$ from the previous section yields the bundle $\mathscr{L}_P$ that appears in Theorem \ref{main}.

We denote by $\grbcon AM$ the 2-category of bundle gerbes with connection over $M$, and by $\hc 1 \grbcon AM$ its \quot{homotopy 1-category} whose morphisms are 2-isomorphism classes of the former 1-morphisms. Further, we denote by $\buncon A{LM}$ the category of principal $A$-bundles with connection over $LM$. The main idea we need in this section is a functor 
\begin{equation*}
L: \hc 1 \grbcon AM \to \buncon A {LM}\text{.}
\end{equation*}
This functor realizes -- on the level of characteristic classes -- the transgression  homomorphism
\begin{equation}
\label{transhom}
\h^2(M,\underline{A}) \to \h^1(LM,\underline{A})
\end{equation}
in the cohomology with values in the sheaf of smooth $A$-valued functions. 

A first version of the functor $L$ has been described by Brylinski and McLaughlin in the language of Dixmier-Douady sheaves of groupoids and line bundles \cite{brylinski1,brylinski4}. We are going to review it briefly in the language of bundle gerbes; for a detailed treatment I refer to \cite[Section 3.1]{waldorf5} and \cite[Section 4]{waldorf10}.  

Given a bundle gerbe $\mathcal{G}$ with connection over $M$, the principal $A$-bundle $L\mathcal{G}$ over $LM$ is defined as follows. Over a loop $\tau\in LM$, its fibre consists of isomorphism classes of flat trivializations of $\tau^{*}\mathcal{G}$, i.e.
\begin{equation*}
L\mathcal{G}_{\tau} := \hc 0 \homcon (\tau^{*}\mathcal{G},\mathcal{I}_0)\text{.}
\end{equation*}
The $A$-action on these fibres is induced by tensoring the principal $A$-bundle $T$ of a trivialization $\mathcal{T}: \tau^{*}\mathcal{G} \to \mathcal{I}_0$ with the pullback of a principal $A$-bundle $P_a$ over $S^1$ with $\mathrm{Hol}_{P_a}(S^1)=a$.
There exists a Fréchet manifold structure on $L\mathcal{G}$ that makes this a smooth principal $A$-bundle over $LM$ \cite[Proposition 3.1.2]{waldorf5}.

The connection on $L\mathcal{G}$ can be defined by prescribing its parallel transport, see \cite{schreiber3,waldorf9}. Suppose $\gamma$ is a path in $LM$, and $\mathcal{T}_0 \in L\mathcal{G}_{\tau_0}$ and $\mathcal{T}_1 \in L\mathcal{G}_{\tau_1}$ are trivializations of $\mathcal{G}$ over the end-loops of $\gamma$. We look at the associated cylinder $\exd\gamma: [0,1] \times S^1 \to M$. As a bundle gerbe with connection, $\mathcal{G}$ associates to this cylinder a surface holonomy  $\mathrm{Hol}_{\mathcal{G}}(\exd\gamma,\mathcal{T}_0,\mathcal{T}_1)\in A$, where the two trivializations act as boundary conditions (\quot{D-branes}), see \cite{carey2}. We put
\begin{equation}
\label{defpartrans}
\tau_{\gamma}: L\mathcal{G}_{\tau_0} \to L\mathcal{G}_{\tau_1}:  \mathcal{T}_0 \mapsto \mathcal{T}_1 \cdot \mathrm{Hol}_{\mathcal{G}}(\exd\gamma,\mathcal{T}_0,\mathcal{T}_1)\text{.} \end{equation}
One can show that this prescription indeed defines indeed a connection on $L\mathcal{G}$ \cite[Proposition 4.3.2]{waldorf10}.

Concerning the morphisms, consider a  1-isomorphism $\mathcal{A}: \mathcal{G} \to \mathcal{H}$  between bundle gerbes with connections. Over a loop $\tau\in LM$, it induces the  morphism
\begin{equation*}
L\mathcal{A}: L\mathcal{G} \to L\mathcal{H}: \mathcal{T} \mapsto \mathcal{T} \circ \tau^{*}\mathcal{A}^{-1}\text{,}
\end{equation*}
where $\mathcal{T}: \tau^{*}\mathcal{G} \to \mathcal{I}_0$ is a trivialization of $\tau^{*}\mathcal{G}$, and $\circ$ and $()^{-1}$ denote the composition and the inversion, respectively, of 1-isomorphisms in the 2-category $\grbcon A{S^1}$. This yields a smooth, connection-preserving bundle morphism, and so finishes the definition of the transgression functor $L$.

Brylinski and McLaughlin use the functor $L$ to study geometrically the image of the cohomological transgression homomorphism \erf{transhom} \cite{brylinski1,brylinski4}. They argue that all bundles in the image of the functor $L$ are automatically equipped with one of the \quot{products} from the beginning of the introduction to this note. We describe a slightly modified version of this product called \emph{fusion product}. 

We look at the set $PM$ of smooth maps $\gamma: [0,1] \to M$ that are locally constant at $\left \lbrace 0,1 \right \rbrace$ (they have \quot{sitting instants}) and equip that set with the evaluation map $\ev:PM \to M\times M$. In \cite{waldorf10} I treat spaces of paths with sitting instants rigorously in the framework of generalized manifolds. In this note we will pretend they were Fréchet manifolds; this will lead us in the end to correct statements. The evaluation map is a \quot{surjective submersion} and we have the fibre products $PM^{[k]}$ available; employing the notation for fibre products introduced in Section \ref{sec:lifting}. Explicitly, a point in $PM^{[k]}$ is a $k$-tuple of paths in $M$ with common endpoints. 

The sitting instants permit to define a smooth map 
\begin{equation*}
l: PM^{[2]} \to LM:(\gamma_1,\gamma_2) \mapsto \prev{\gamma_2} \pcomp \gamma_1\text{,}
\end{equation*}
where $\prev{\gamma}$ denotes the reversed path and $\pcomp$ denotes the concatenation of paths. Combining this map with the projections $\ev_{ij}: PM^{[3]} \to PM^{[2]}$ we obtain the smooth maps $e_{ij} := l \circ  \ev_{ij}$. Now we are in the position to give the central definition of this note.

\begin{definition}[{{\cite[Definition 2.1.3]{waldorf10}}}]
\label{def:fusionproduct}
A \emph{fusion product} on a principal $A$-bundle $P$ over $LM$ is a smooth bundle isomorphism
\begin{equation*}
\lambda : e_{12}^{*}P \otimes e_{23}^{*}P \to e_{13}^{*}P
\end{equation*}
over $PM^{[3]}$ that is associative over $PM^{[4]}$.  
\end{definition}

It is not totally trivial to spot the fusion product on a transgressed principal $A$-bundle $L\mathcal{G}$. It can be characterized as follows --- for a detailed treatment see \cite[Section 4.2]{waldorf10}. For $(\gamma_1,\gamma_2,\gamma_3)$ an element in the space $PM^{[3]}$, we write $\tau_{ij} := l(\gamma_1,\gamma_2) \in LM$. Pick trivializations $\mathcal{T}_{ij} \in L\mathcal{G}_{\tau_{ij}}$ over these loops. We introduce the maps $\iota_1,\iota_2: [0,1] \to \R/\Z$ defined by $\iota_1(t) := \frac{1}{2}t$ and $\iota_2(t)=1-\frac{1}{2}t$. Pullback along $\iota_1$ and $\iota_2$ \quot{restricts} the trivializations $\mathcal{T}_{ij}$ to intervals, all together giving two trivializations over each of the paths $\gamma_k$. Now we pick 2-isomorphisms
\begin{equation*}
\phi_1 : \iota_1^{*}\mathcal{T}_{12} \Rightarrow \iota_1^{*}\mathcal{T}_{13}
\quomma
\phi_2: \iota_2^{*}\mathcal{T}_{12} \Rightarrow \iota_1^{*}\mathcal{T}_{23}
\quand
\phi_3: \iota_2^{*}\mathcal{T}_{23} \Rightarrow \iota_2^{*}\mathcal{T}_{13}\text{;}
\end{equation*}
these always exist and the notation is such that the 2-isomorphism $\phi_k$ is over the path $\gamma_k$. Let $x$ be the common initial point and $y$ be the common end point of the paths $\gamma_k$. All three 2-isomorphisms can be restricted to $x$ and to $y$, and we have
\begin{equation}
\label{lambda}
\lambda(\mathcal{T}_{12} \otimes \mathcal{T}_{23}) = \mathcal{T}_{13}
\end{equation}
if and only if the relation $\phi_1 = \phi_3 \circ \phi_2$ holds over both $x$ and $y$.

The work of Brylinski and McLaughlin \cite{brylinski1,brylinski4} suggests that the existence of fusion products characterizes bundles in the image of the transgression functor $L$  among all principal $A$-bundles over $LM$. The main result of my paper \cite{waldorf10} (Theorem \ref{th:transgression} below) shows that this is  true if one requires additionally a connection on $P$ satisfying three conditions: it has to be compatible, symmetrizing and superficial. The easiest of these is the \emph{compatibility} with the fusion product: it simply means that the fusion product $\lambda$ is a connection-preserving bundle morphism. The second condition requires that the connection \emph{symmetrizes} the fusion product in a subtle way, and the third condition imposes  constraints on its holonomy. Since these conditions will not appear explicitly in the following, I omit a more detailed discussion.

\begin{definition}[{{\cite[Definition A]{waldorf10}}}]
\label{fusbun}
A \emph{fusion bundle with superficial connection} over $LM$ is a principal $A$-bundle $P$ over $LM$ with a fusion product and a compatible, symmetrizing and superficial connection. 
\end{definition}

The principal $A$-bundle $L\mathcal{G}$ equipped with the connection and the fusion product constructed above is such a fusion bundle with connection. To see this, one has to check the three conditions ---  this is  quite tedious and can be found in \cite[Sections 4.2 and 4.3]{waldorf10}. 

Let us look at the category $\fusbuncon A {LM}$ composed of fusion bundles with connection and connection-preserving, fusion-preserving bundle isomorphisms. As we have motivated above, the transgression functor $L$ \emph{lifts} to this category  as an \quot{improved} transgression functor
\begin{equation*}
\tr: \hc 1 \grbcon A M \to \fusbuncon A {LM}\text{.}
\end{equation*}
In the sense of the following theorem, this functor captures \emph{all} features of transgression.

\begin{theorem}[{{\cite[Theorem A]{waldorf10}}}]
\label{th:transgression}
Let $M$ be a connected smooth manifold. Then, the  improved transgression  functor $\mathscr{T}$
is an  equivalence of  categories. 
\end{theorem}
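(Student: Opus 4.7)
The plan is to exhibit a quasi-inverse \emph{regression} functor $\mathscr{R}: \fusbuncon A {LM} \to \hc 1 \grbcon A M$ and to prove that $\mathscr{R}\circ \mathscr{T}$ and $\mathscr{T}\circ\mathscr{R}$ are naturally isomorphic to the respective identities. To define $\mathscr{R}$ on a fusion bundle $(P,\lambda,\omega)$, I would take the path space $PM$ (with sitting instants) together with its endpoint evaluation $\ev:PM\to M$ as the surjective submersion of the gerbe $\mathscr{R}(P)$. Over the fibre product $PM^{[2]}$ of pairs of paths with common endpoints, the map $l:PM^{[2]}\to LM$ pulls the bundle $P$ back to $l^{*}P$, which I would take as the principal $A$-bundle of the gerbe. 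The fusion product $\lambda$ provides the bundle gerbe multiplication over $PM^{[3]}$, and its associativity over $PM^{[4]}$ is exactly the gerbe axiom.

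To equip $\mathscr{R}(P)$ with a connection, pull the connection $\omega$ back along $l$: the compatibility axiom of Definition \ref{fusbun} forces the bundle gerbe multiplication to be connection-preserving. Producing the \emph{curving} is the step where the additional axioms enter critically: the \emph{symmetrizing} condition on $\omega$ allows one to descend a certain 1-form on $PM^{[2]}$, while the \emph{superficial} condition guarantees that a curving 2-form on $PM$ exists whose coboundary reproduces the curvature of $l^{*}\omega$. Concretely, the curving is obtained by integrating $\omega$ against families of paths, in the spirit of the path-space constructions in \cite{schreiber3,waldorf9}. Functoriality on morphisms is then routine: a fusion- and connection-preserving isomorphism $P\to P'$ pulls back along $l$ to a 1-isomorphism of the two regressed gerbes.

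To check $\mathscr{T}\circ \mathscr{R}\cong \id$, I would use the definition $L\mathcal{G}_{\tau}:=\hc 0 \homcon(\tau^{*}\mathcal{G},\mathcal{I}_0)$ applied to $\mathcal{G}=\mathscr{R}(P)$: a loop $\tau$ is $l(\gamma_1,\gamma_2)$ for suitable paths, and a flat trivialization of $\tau^{*}\mathscr{R}(P)$ can be identified, via its definition in terms of $l^{*}P$, with a point in the fibre $P_\tau$; the recipe \erf{lambda} together with the fusion product of $\mathscr{T}(\mathscr{R}(P))$ then yields a bundle isomorphism $P\to \mathscr{T}(\mathscr{R}(P))$ which is fusion- and connection-preserving by construction. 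For $\mathscr{R}\circ \mathscr{T}\cong \id$, given a gerbe $\mathcal{G}$ over $M$, a point in the bundle of $\mathscr{R}(\mathscr{T}(\mathcal{G}))$ over $(\gamma_1,\gamma_2)\in PM^{[2]}$ is a flat trivialization of $l(\gamma_1,\gamma_2)^{*}\mathcal{G}$; comparing with the trivializations coming from pulling $\mathcal{G}$ itself back to $PM$ yields the desired canonical 1-isomorphism $\mathcal{G}\to \mathscr{R}(\mathscr{T}(\mathcal{G}))$, with naturality following from the functoriality of transgression.

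The main obstacle will be the construction and verification of the curving in step two, together with the coherence of the natural isomorphisms in step three. Specifically, one must show that the surface-holonomy formula \erf{defpartrans} for the parallel transport of $L\mathcal{G}$ is compatible, after regressing back, with the pointwise description of the fusion product via formula \erf{lambda}; this is the content where the \emph{symmetrizing} and \emph{superficial} conditions do serious work, and is the reason these hypotheses were included in Definition \ref{fusbun} in the first place. The technical difficulties of smoothness of the involved spaces of paths and trivializations would be handled in the diffeological/Fr\'echet framework of \cite{waldorf9,waldorf10}, which I would treat as a black box throughout.
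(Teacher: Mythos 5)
This theorem is not proved in the paper at all: it is imported verbatim as \cite[Theorem A]{waldorf10}, so there is no in-text argument to compare yours against. That said, your strategy --- exhibiting a regression functor $\mathscr{R}$ built from a path space and showing it is quasi-inverse to $\mathscr{T}$ --- is indeed the strategy of the cited reference, and most of your outline (fusion product as bundle gerbe product over $PM^{[3]}$, compatibility giving a connection-preserving product, the symmetrizing and superficial conditions doing the real work for the curving and for matching parallel transport with the fibrewise description of $\lambda$) is a faithful summary of how that proof goes.

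There is, however, one concrete flaw in your setup. You propose to use the full path space $PM$ with \emph{the} endpoint evaluation $\ev\maps PM \to M$ as the surjective submersion of $\mathscr{R}(P)$. In the paper's conventions $\ev$ lands in $M \times M$ (both endpoints), so as written your submersion is either not a map to $M$, or, if you evaluate at only one endpoint, the fibre product $PM^{[2]}$ consists of pairs of paths sharing only that endpoint and $l(\gamma_1,\gamma_2)=\prev{\gamma_2}\pcomp\gamma_1$ is not a loop. The correct construction fixes a basepoint $x\in M$ and uses the based path space $P_xM$ of paths starting at $x$, with evaluation at the free endpoint as the surjective submersion; then $P_xM^{[2]}$ is pairs of paths from $x$ with common endpoint and $l$ produces loops through $x$. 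This is exactly where the hypothesis that $M$ is \emph{connected} enters (surjectivity of the endpoint evaluation), and the fact that this hypothesis appears nowhere in your argument is the symptom of the gap. It also creates downstream work you have not accounted for: the regressed gerbe a priori depends on $x$, and the natural isomorphism $\mathscr{T}\circ\mathscr{R}\cong\id$ only directly identifies fibres over loops based at $x$, so one needs the superficial connection and the fusion product to propagate the identification to arbitrary loops and to establish basepoint-independence. With the based path space put in place, the rest of your sketch is a reasonable blueprint of the actual proof.
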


To close this section about transgression let us compute the transgression $\mathscr{T}_{\mathcal{I}_{\rho}}$ of the trivial bundle gerbe $\mathcal{I}$ equipped with the connection defined by a 2-form $\rho\in\Omega^2(M,\mathfrak{a})$. \begin{lemma}
\label{lem:trivtrans}
The fusion bundle with connection $\mathscr{T}_{\mathcal{I}_{\rho}}$ has a canonical, fusion-preserving section $\sigma: LM \to \mathscr{T}_{\mathcal{I}_{\rho}}$ of curvature $L\rho$, i.e.
\begin{equation*}
\lambda(\sigma(\prev{\gamma_2} \pcomp \gamma_1) \otimes \sigma(\prev{\gamma_3} \pcomp \gamma_2)) = \sigma(\prev{\gamma_3} \pcomp \gamma_1)
\quand 
\sigma^{*}\omega =  \int_{S^1} \ev^{*}\rho
\end{equation*}
for all $(\gamma_1,\gamma_2,\gamma_3) \in PM^{[3]}$, where $\lambda$ is the fusion product, $\omega$ the connection on $\mathscr{T}_{\mathcal{I}_{\rho}}$ and $\ev: S^1 \times LM \to M$ is the evaluation map. 
\end{lemma}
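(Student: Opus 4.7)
The plan is to exhibit $\sigma$ explicitly and then verify the two asserted properties by unwinding the respective definitions for the trivial gerbe.

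First, the \emph{construction}. For any loop $\tau\in LM$, the pullback $\tau^{*}\mathcal{I}_{\rho}$ is the trivial bundle gerbe $\mathcal{I}_{\tau^{*}\rho}$ over $S^1$; but $\tau^{*}\rho$ is a 2-form on a 1-manifold and hence vanishes identically. Thus a flat trivialization $\mathcal{T}:\tau^{*}\mathcal{I}_{\rho}\to\mathcal{I}_0$ is precisely a principal $A$-bundle with flat connection over $S^1$, and the trivial bundle $S^1 \times A$ with the trivial connection provides a canonical, smoothly varying choice. I define $\sigma(\tau)$ to be the 2-isomorphism class of this canonical trivialization. Smoothness is evident because the construction is manifestly functorial in $\tau$ (or one checks it on the Fr\'echet manifold $\mathscr{T}_{\mathcal{I}_{\rho}}$ built in \cite{waldorf5}).

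Next, \emph{fusion-preservation}. Fix $(\gamma_1,\gamma_2,\gamma_3)\in PM^{[3]}$ and write $\tau_{ij}:=l(\gamma_i,\gamma_j)$. Over each $\tau_{ij}$ the chosen representative of $\sigma(\tau_{ij})$ is the trivial flat $A$-bundle, so the pullbacks along $\iota_1$ and $\iota_2$ are all canonically identified with the trivial flat bundle over $[0,1]$. In the criterion \erf{lambda} we may therefore take each of the 2-isomorphisms $\phi_1,\phi_2,\phi_3$ to be the identity, and the condition $\phi_1=\phi_3\circ\phi_2$ holds trivially over the endpoints $x$ and $y$. Hence $\lambda(\sigma(\tau_{12})\otimes\sigma(\tau_{23}))=\sigma(\tau_{13})$, which is the asserted fusion identity.

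Finally, the \emph{curvature computation}. By \erf{defpartrans}, the parallel transport of the connection $\omega$ along a path $\gamma$ in $LM$ joining $\tau_0$ to $\tau_1$ is
\[
\tau_{\gamma}(\sigma(\tau_0)) \eq \sigma(\tau_1)\cdot\mathrm{Hol}_{\mathcal{I}_{\rho}}(\exd\gamma,\sigma(\tau_0),\sigma(\tau_1)).
\]
For the trivial gerbe $\mathcal{I}_{\rho}$ with trivial trivializations as D-brane boundary conditions, the surface holonomy reduces simply to $\exp\!\big(\int_{[0,1]\times S^1}\exd\gamma^{*}\rho\big)$, as follows directly from the definition of surface holonomy with boundary in \cite{carey2}. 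Comparing this with the general relation between parallel transport and the pulled-back connection form, namely $\tau_{\gamma}(\sigma(\tau_0))=\sigma(\tau_1)\cdot\exp(-\int_{\gamma}\sigma^{*}\omega)$, and using $\int_{[0,1]\times S^1}\exd\gamma^{*}\rho=\int_{\gamma}\int_{S^1}\ev^{*}\rho$, we obtain $\sigma^{*}\omega=-\int_{S^1}\ev^{*}\rho=-L\rho$.

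The main obstacle is bookkeeping of signs and orientations in the surface-holonomy/parallel-transport comparison; everything else is essentially formal once one notices that the canonical trivializations over intervals glue by identity 2-morphisms, reducing the fusion condition to a tautology.
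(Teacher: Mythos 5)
Your proposal is correct and follows essentially the same route as the paper: the canonical section is the (class of the) identity/trivial flat trivialization $\tau^{*}\id_{\mathcal{I}_{\rho}}$, fusion-preservation is checked via the criterion \erf{lambda} with identity 2-isomorphisms, and the curvature is obtained by comparing the parallel transport \erf{defpartrans} with the surface holonomy $\exp(\int_{\exd\gamma}\rho)$ of the trivial gerbe and splitting the integral. The only cosmetic difference is that the paper closes the curvature argument by invoking that 1-forms are determined by their integrals along paths, which your direct comparison implicitly uses as well.
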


\begin{comment}
In the published version of this article it is claimed that the curvature of the section
$\sigma$ is $-L\rho$; this claim missed the sign in the last equality of \erf{eq:complicated}, which comes from the fact  passing between these two integrals permutes the two tangent vectors the 2-form $\rho$ depends on.
\end{comment}

\begin{proof}
The section itself is defined  by the fact that we have over every loop $\tau\in LM$ a distinguished element in the fibre $L\mathcal{I}_{\rho}$, namely $\tau^{*}\id_{\mathcal{I}_{\rho}}$. That this section is fusion-preserving is straightforward to see using the  characterization \erf{lambda} of $\lambda$ and by choosing $\mathcal{T}_{ij} := \tau_{ij}^{*}\id_{\mathcal{I}_{\rho}}$. For the curvature we calculate for a path $\gamma : [0,1] \to LM$:
\begin{equation}
\label{eq:complicated}
\exp\left ( - \int_{\gamma} \sigma^{*}\omega \right ) = \mathrm{Hol}_{\mathcal{I}_{\rho}}(\exd\gamma,\sigma(\gamma(0)),\sigma(\gamma(1))) = \exp \left ( \int_{\exd\gamma}\rho \right ) = \exp \left ( -\int_{\gamma}\int_{S^1} \ev^{*}\rho \right )\text{.}
\end{equation}
The first equality is the relation between a connection 1-form and its parallel transport \erf{defpartrans} --- the minus  is a convention of \cite{schreiber3} that is required to consistently translate between connection 1-forms and their parallel transport. The second equality calculates the surface holonomy of a trivial bundle gerbe, and the third is obtained by splitting  the integration over $[0,1] \times S^1$ into two integrals.
\begin{comment}
This is the calculation:
\begin{eqnarray*}
-\int_{\gamma^{\vee}}\rho &=&- \int_{S^1 \times S^1} (\gamma^{\vee})^{*}\rho 
\\&=&- \int_{0^1}\mathrm{d}z_1 \int_{0}^1\mathrm{d}z_2 \;(\gamma^{\vee})^{*}\rho(\partial_{z_1},\partial_{z_2})
\\&=& - \int_{0}^1\mathrm{d}z_1 \int_{0}^1\mathrm{d}z_2 \;\rho_{\gamma^{\vee}(z_1,z_2)}(\partial_{z_1}\gamma^{\vee}(z_1,z_2),\partial_{z_2}\gamma^{\vee}(z_1,z_2))
\\&=& \int_0^1 \mathrm{d}z_1\; \int_0^1 \mathrm{d}z_2\; \rho_{\gamma(z_1)(z_2)}(\partial_{z_2}\gamma(z_1)(z_2),\partial_{z_1}\gamma(z_1)(z_2))
\\&=& \int_0^1 \mathrm{d}z_1\; \tau_{\Omega}(\rho)_{\gamma(z_1)}(\partial_{z_1}\gamma(z_1)) 
\\&=& \int_{\gamma} \tau_{\Omega}(\rho)\text{.}
\end{eqnarray*}
\end{comment}
But 1-forms are characterized uniquely by their integrals along paths \cite[Theorem B.2]{waldorf9} --- this finishes the proof. 
\end{proof}

\setsecnumdepth{1}

\section{The Bundle $\mathscr{L}_P$ over the Loop Space}

\label{sec:proof}

In this section we combine the main results of Sections \ref{sec:lifting} and \ref{sec:transgression} and prove Theorem \ref{main}. Let $P$ be a principal $G$-bundle with connection over $M$, and let $\sigma$ and $r$ be our choices of a split and a reduction, respectively, so that a lifting bundle gerbe $\mathcal{G}_P$ with connection is determined. We set
\begin{equation}
\label{lp}
\mathscr{L}_P := \mathscr{T}_{\mathcal{G}_P}\text{.}
\end{equation}

With this definition, we are  in a position  to give the proof of Theorem \ref{main}. We use that the equivalences from Theorems \ref{th:lifting} and \ref{th:transgression} induce bijections on isomorphism classes and Hom-sets, respectively, and obtain bijections
\begin{equation*}
\hc 0 (\struccon {\hat G}\rho P) \cong \hc 0 \homcon (\mathcal{G}_P,\mathcal{I}_{-\rho}) \cong \mathrm{Hom}(\mathscr{L}_{P},\mathscr{T}_{\mathcal{I}_{-\rho}})\text{.} 
\end{equation*}
Using the canonical section of $\mathscr{T}_{\mathcal{I}_{-\rho}}$ from Lemma \ref{lem:trivtrans} we can identify the set on the right hand side with the set of fusion-preserving sections of $\mathscr{L}_P$ of curvature $-L\rho$. 
This is Theorem \ref{main}.

Let us look in more detail what the bundle $\mathscr{L}_P$ defined in \erf{lp} is, under the identification of geometric $\hat G$-lifts of $P$ and trivializations of $\mathcal{G}_P$ of Theorem \ref{th:lifting}. Over a loop $\tau \in LM$, the fibre of $\mathscr{L}_P$ consists of all geometric  $\hat G$-lifts of $\tau^{*}P$. The fusion product on $\mathscr{L}_P$ can be seen as a structure that \emph{glues}  geometric $\hat G$-lifts over loops $\prev{\gamma_2} \pcomp \gamma_1$ and $\prev{\gamma_3} \pcomp \gamma_2$ to a third geometric $\hat G$-lift over the loop $\prev{\gamma_3} \pcomp \gamma_1$. The connection can  be described as follows: if $\gamma$ is a path in $LM$ with end-loops $\tau_0$ and $\tau_1$, then geometric  $\hat G$-lifts $\hat P_0$ of $\tau_0^{*}P$ and $\hat P_1$ of $\tau_1^{*}P$ are related by parallel transport along $\gamma$, if and only if $\hat P_0$ and $\hat P_1$ are restrictions of a geometric $\hat G$-lift $(\hat P,\hat\eta)$ over the cylinder $\exd\gamma:S^1 \times [0,1] \to M$ with
\begin{equation*}
\exp \left ( \int_{\exd\gamma} \mathrm{scurv}(\hat\eta) \right ) = 1\text{.}
\end{equation*}

Finally, with the above explanations, the map in Theorem \ref{main}  assigns to a geometric $\hat G$-lift $\hat P$ of $P$ the section $\sigma_{\hat P}: LM \to \mathscr{L}_P$ given by $\sigma_{\hat P}(\tau) := \tau^{*}\hat P$. Theorem \ref{main} states the exact conditions under which one can go in the opposite direction, i.e. when \quot{loop-wise} geometric $\hat G$-lifts of $P$ patch together to a \quot{global} one.

\section{An Alternative Construction of $\mathscr{L}_P$}

\label{sec:alternative}

In this section we present an alternative construction of the bundle $\mathscr{L}_P$ that does not use any gerbe theory.
We start  with a principal $G$-bundle $P$ over $M$ with connection $\eta$, and fix choices of a split $\sigma$ and a reduction $r$ with respect to $\sigma$. 

As a prerequisite we recall the following fact. 

\begin{lemma}
\label{lem:looping}
Let $G$ be a connected Lie group, and let $\pi:P\to M$ be a principal $G$-bundle over $M$. Then, $L\pi: LP \to LM$ is a Fréchet principal $LG$-bundle over $LM$.
\end{lemma}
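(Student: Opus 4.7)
The plan is to verify the three defining axioms of a principal $LG$-bundle for $L\pi\maps LP\to LM$: a smooth and free right $LG$-action on $LP$, transitivity on the non-empty fibres of $L\pi$, and local triviality over $LM$. Throughout, I would rely on the standard fact that $C^{\infty}(S^1,-)$ takes finite-dimensional manifolds to Fr\'echet manifolds functorially, and that $LG$ is a Fr\'echet Lie group under pointwise multiplication.

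First I would define the $LG$-action pointwise by $(\gamma\cdot g)(t):=\gamma(t)\cdot g(t)$. Its smoothness and freeness are inherited directly from the $G$-action on $P$. If $\gamma_1,\gamma_2\in L\pi^{-1}(\tau)$ project to the same loop, the smooth division map $P\times_M P\to G$ (which encodes that $P$ is principal) yields a smooth $g\maps S^1\to G$ with $\gamma_1\cdot g=\gamma_2$, giving transitivity on non-empty fibres. Surjectivity of $L\pi$ is the point where the connectedness of $G$ enters: given $\tau\in LM$, a smooth path lift $\tilde\tau\maps[0,1]\to P$ (produced from local trivializations of $P$ and compactness of $S^1$) fails to close up only by some element $g\in G$, and connectedness of $G$ lets me pick a smooth path with sitting instants from $e$ to $g^{-1}$ and modify $\tilde\tau$ into an honest loop over $\tau$.

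Local triviality is the main technical step. Given $\tau_0\in LM$, compactness of $S^1$ supplies a partition $0=t_0<\ldots<t_N=1$ of $S^1$ and trivializing opens $U_i\subset M$ with $\tau_0([t_i,t_{i+1}])\subset U_i$. This yields an open neighbourhood $V\subset LM$ of $\tau_0$ consisting of loops $\tau$ with $\tau([t_i,t_{i+1}])\subset U_i$ for every $i$. A smooth section $s\maps V\to LP$ over this $V$ then induces the desired trivialization $V\times LG\to L\pi^{-1}(V)$ via $(\tau,g)\mapsto s(\tau)\cdot g$, so the whole argument reduces to producing such a section smoothly in $\tau$.

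The principal obstacle is that assembling $s(\tau)$ directly from the local trivializations $P|_{U_i}\cong U_i\times G$ only gives a loop that depends piecewise smoothly on $\tau$, with potential jumps at the nodes $t_i$. I would resolve this by thickening the cover: replace the partition by slightly overlapping intervals $I_i\supset[t_i,t_{i+1}]$ still satisfying $\tau(I_i)\subset U_i$ for all $\tau\in V$, fix a smooth partition of unity $\{\rho_i\}$ on $S^1$ subordinate to $\{I_i\}$, and blend the local sections using the smooth transition functions $g_{ij}\maps U_i\cap U_j\to G$ together with a smooth local contraction of a neighbourhood of $e\in G$ (available after further shrinking $V$, with connectedness of $G$ implicit once more). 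A cleaner model-free alternative would be to fix a reference lift $\gamma_0\in L\pi^{-1}(\tau_0)$ (provided by the preceding surjectivity argument) and transport an exponential chart of $LM$ around $\tau_0$ to a chart of $LP$ around $\gamma_0$ via the horizontal lifting map of any Ehresmann connection on $P$; this would simultaneously endow $LP$ with its Fr\'echet manifold structure and produce the required local trivialization.
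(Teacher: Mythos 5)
Your overall strategy is sound, and your ``cleaner model-free alternative'' for local triviality --- using horizontal lifts with respect to a chosen connection on $P$ --- is essentially what the paper does: the published proof only argues surjectivity of $L\pi$ (horizontal lift of the loop, then correction by a path in $G$ with sitting instants) and delegates local triviality and the Fr\'echet structure to Spera--Wurzbacher [SW07, Prop.~1.9], which proceeds exactly along the lines of your second route. Your first route for local triviality, however, does not work as stated: one cannot blend $G$-valued local sections with a partition of unity, and shrinking the neighbourhood $V$ in loop space does not make the transition functions $g_{ij}$ close to the identity, so the proposed ``smooth local contraction of a neighbourhood of $e\in G$'' has nothing to act on. Since you offer the connection-based construction as a replacement, this is not fatal, but you should drop the blending argument rather than leave it as the nominal main line.

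There is one concrete gap common to both of your routes that the paper takes care to close. After lifting $\tau$ to a path $\tilde\tau\maps [0,1]\to P$ and multiplying by a path $\beta$ in $G$ from $e$ to the discrepancy element, the resulting map satisfies $\gamma(0)=\gamma(1)$, but smoothness as a map $S^1\to P$ requires the germs of $\gamma$ at $0$ and at $1$ to agree to all orders; near $0$ one has $\gamma=\tilde\tau$ and near $1$ one has $\gamma=\tilde\tau\cdot g^{-1}$ (thanks to the sitting instants of $\beta$), and these two germs project to the same germ of $\tau$ but may differ in their vertical derivatives. Your $\tilde\tau$ is assembled from local trivializations, so nothing forces agreement at the seam. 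The paper's fix is to take $\tilde\tau$ \emph{horizontal} in a neighbourhood of $\{0,1\}$: then both germs are horizontal lifts of $\tau$ through the same point, hence coincide by uniqueness of horizontal lifts. You should either adopt that device or otherwise arrange the lift to be given by one fixed local section composed with $\tau$ on an entire neighbourhood of the seam before applying the correction.
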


\begin{proof}
The assumption that $G$ is connected assures that $L\pi: LP \to LM$ is surjective. One chooses a connection on $P$ and lifts a loop $\tau$ in $M$ horizontally to a path $\gamma$ in $P$. Then one acts on it with a path $\beta$ in $G$ connecting $1$  with the difference between $\gamma(0)$ and $\gamma(1)$. The new path $\gamma\cdot\beta$ is closed, and by choosing $\beta$ with sitting instants, horizontal in a neighborhood of $\left \lbrace 0,1  \right \rbrace$, in particular, it is a  smooth loop. The proof is completed  in \cite[Proposition 1.9]{spera1}.
\end{proof}

\begin{comment}
In more detail, we choose a connection on $P$. For a point $\tau_0\in LM$, we  choose a  local section $s:U \to P$ defined in an open neighborhood $U$ of the base point $\tau_0(1)$. Let $V \subset LM$ denote the open set of loops based in $U$. For $\tau\in V$, we denote by $\tilde\tau: [0,1] \to P$ its horizontal lift with $\tilde\tau(0)=s(\tau(1))$. Taking the \quot{difference} between its endpoints, we obtain a smooth map $h: V \to G$. 
Since $G$ is connected by assumption, there exists an open neighborhood $W$ of $\tau_0$ such that $h$ lifts to $H: W \to P_eG$ along the endpoint evaluation. We claim that for all $\tau\in W$ the path
\begin{equation*}
\gamma_\tau:[0,1] \to P: t \mapsto \tilde\tau(t)\cdot H(\tau)(t)
\end{equation*}
is in fact a  loop; this finishes the construction of a local section $\gamma: W \to LP$. Since we have $\gamma_{\tau}(0)=\gamma_{\tau}(1)$ by construction, to prove the claim it remains to check that $\gamma_{\tau}$ glues smoothly. As in Section \ref{sec:transgression} we have assumed that the paths in the image of $H$ have sitting instants. Thus, $H$ is constant in the neighborhood of the gluing point. But then, $\gamma_{\tau}$ is horizontal in this neighborhood, and thus smooth.   
\end{comment}

By Lemma \ref{lem:looping} the given bundle $P$ defines  a surjective submersion $L\pi: LP\to LM$, and we have  fibre products $LP^{[k]}$ available. We remark that taking loops commutes with taking fibre products in the sense of canonical diffeomorphisms $L(P^{[k]}) \cong LP^{[k]}$, and we will not further distinguish between these two manifolds. From Section \ref{sec:lifting} we take the  principal $A$-bundle $Q$ over $P^{[2]}$ with its connection $\lambda_{\eta}$, and denote by $g: LP^{[2]} \to A$ its holonomy. The multiplicativity of  $Q$ from \erf{bgprod} implies the cocycle condition 
\begin{equation*}
L\pi_{12}^{*}g \cdot L\pi_{23}^{*}g = L\pi_{13}^{*}g
\end{equation*}
over $LP^{[3]}$. 
We regard $g$ as a \v Cech cocycle with respect to the \quot{cover} $L\pi: LP \to LM$, and apply the usual reconstruction of principal bundles from cocycles:
\begin{equation*}
\mathscr{L}_P^{\,\prime} := (LP \times A) \;/\; \sim_g
\quith
(\tau',a) \sim_g (\tau,g(\tau,\tau')\cdot a)
\end{equation*}
is a principal $A$-bundle over $LM$. This is our alternative construction of $\mathscr{L}_P$. Before we show that $\mathscr{L}_P^{\,\prime}$ and $\mathscr{L}_P$ are isomorphic, we continue with specifying connection and fusion product on $\mathscr{L}_P^{\,\prime}$.

In our \v Cech picture, a connection on $\mathscr{L}_P^{\,\prime}$ is determined by a \quot{local} 1-form $\omega \in \Omega^1(LP,\mathfrak{a})$ that is compatible with the cocycle $g$ in the sense that $L\pi_2^{*}\omega = L\pi_1^{*}\omega + g^{*}\theta$ over $LP^{[2]}$. 
\begin{comment}
Indeed, the actual connection 1-form on $\mathscr{L}_P'$ is
$\tilde\omega_{(\tau,a)} := \omega_{\tau} + \theta_{a}$.
\end{comment}
We choose
$\omega := LC_{\eta}$, the transgression of the curving 2-form $C_{\eta}$ from \erf{curvingform}. Indeed, we find
\begin{equation*}
L\pi_2^{*}\omega = L\pi_1^{*}\omega + L\mathrm{curv}(\lambda_{\eta}) =L\pi_1^{*}\omega + \mathrm{dlog}(\mathrm{Hol}_Q) = L\pi_1^{*}\omega + g^{*}\theta\text{.}
\end{equation*}
The first equality is the transgression of Eq. \erf{curving}, and the second equality uses the fact that the derivative of the holonomy of a connection $\lambda_{\eta}$ is equal to  the transgression of its curvature, see e.g. \cite[Prop. 3.2.13]{waldorf9}.

Finally, we equip the bundle $\mathscr{L}_P^{\,\prime}$ with a fusion product. The \v Cech description of a fusion product is not particularly nice but  possible. We  consider a triple $(\gamma_1,\gamma_2,\gamma_3) \in PM^{[3]}$ of paths with common endpoints, and lifts $\tilde \beta_{ij}\in LP$ of the three associated loops $\beta_{ij} := \ell(\gamma_i,\gamma_j) \in LM$. Such data form the space $Z := PM^{[3]} \times_{L M^{3}} LP^3$. A fusion product is given by a smooth map
$f: Z \to A$ such that --- if $\tilde\beta_{ij}'$  are different lifts of the loops $\beta_{ij}$ --- one has
\begin{equation}
\label{fusid1}
g(\tilde\beta_{12},\tilde\beta_{12}') \cdot g(\tilde\beta_{23},\tilde\beta_{23}') \cdot f(\tilde\beta_{12}',\tilde\beta_{23}',\tilde\beta_{13}') \\= f(\tilde\beta_{12},\tilde\beta_{23},\tilde\beta_{13}) \cdot g(\tilde\beta_{13},\tilde\beta_{13}')\text{.}
\end{equation}
Additionally, the associativity condition for the fusion product requires that for four paths $(\gamma_1,\gamma_2,\gamma_3,\gamma_4)\in PM^{[4]}$ and accordant lifts $\tilde \beta_{ij}$ we have
\begin{equation}
\label{fusid2}
f(\tilde \beta_{13},\tilde \beta_{34},\tilde \beta_{14}) \cdot f(\tilde \beta_{12},\tilde \beta_{23},\tilde \beta_{13}) = f(\tilde \beta_{12},\tilde \beta_{24},\tilde \beta_{14})\cdot f(\tilde \beta_{23},\tilde \beta_{34},\tilde \beta_{24})\text{.}
\end{equation}

In the present situation of the bundle $\mathscr{L}_P^{\,\prime}$, the map $f:Z \to A$ representing the fusion product is produced in the following way. We split each loop $\beta_{ij}$ into two paths
\begin{equation*}
\mu_{ij}(t) := \beta_{ij}(\textstyle\frac{1}{2}t)
\quand
\nu_{ij}(t) := \beta_{ij}(1-\textstyle\frac{1}{2}t)\text{.}
\end{equation*}
We combine these to three paths
\begin{equation*}
\tilde \gamma_1 := (\mu_{12},\mu_{13})
\quomma
\tilde\gamma_2 := (\nu_{12},\mu_{23})
\quand
\tilde\gamma_3 := (\nu_{23},\nu_{13})
\end{equation*}
in $P^{[2]}$, and the indices are chosen such that each path  $\tilde\gamma_k$ sits over the original path $\gamma_k$ in $M$. We want to measure the failure of the parallel transport $\tau_{\tilde\gamma_k}$ in the bundle $Q$ over $P^{[2]}$  to respect the isomorphism $\mu$ from Eq. \erf{bgprod}. For that purpose, we choose elements $\hat g_{2} \in Q_{\tilde\gamma_2(0)}$ and $\hat g_{3} \in Q_{\tilde\gamma_3(0)}$ and set $\hat g_{1} := \mu(\hat g_{2} \otimes \hat g_{3}) \in Q_{\tilde\gamma_1(0)}$. Now we define $f$ such that
\begin{equation*}
\mu(\ptr{\tilde\gamma_2}(\hat g_{2}) \otimes \ptr{\tilde\gamma_3}(\hat g_{3})) \cdot  f(\beta_{12},\beta_{23},\beta_{13}) = \ptr{\tilde\gamma_1}(\hat g_{1}) \text{.}
\end{equation*}
Due to the $A$-equivariance of parallel transport,  this definition is independent of the choices of $\hat g_{2}$ and $\hat g_{3}$. Since $\hat g_{2}$ and $\hat g_3$ can be chosen locally in a smooth way, $f$ is a smooth map. Checking the identities \erf{fusid1} and \erf{fusid2} is a straightforward calculation that we leave out for brevity.

Summarizing, we have defined a principal $A$-bundle $\mathscr{L}_P^{\,\prime}$ over $LM$ with a connection and a fusion product. As a consequence of Theorem \ref{prop:transgression} below,  $\mathscr{L}_P^{\,\prime}$ is in fact a fusion bundle with connection in the sense of Definition \ref{fusbun}.

\begin{theorem}
\label{prop:transgression}
There exists a canonical, connection-preserving and fusion-preserving bundle isomorphism $\eta: \mathscr{L}_P \to \mathscr{L}_P^{\,\prime}$.
\end{theorem}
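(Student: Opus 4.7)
The plan is to define $\eta$ on fibers by measuring a flat trivialization against a chosen lift to $LP$ and then verify that the resulting bundle map is well-defined, connection-preserving, and fusion-preserving. For $\tau\in LM$, a flat trivialization $\mathcal{T}=(T,\kappa)$ of $\tau^{*}\mathcal{G}_P$ consists of a principal $A$-bundle $T$ with connection over $\tau^{*}P$ of curvature $-\tau^{*}C_{\eta}$, together with a connection-preserving bundle isomorphism $\kappa$ over the fibre product. For a smooth lift $\tilde\tau\in LP$ of $\tau$, viewed as a section of $\tau^{*}P$ over $S^{1}$, the bundle $\tilde\tau^{*}T$ over $S^{1}$ has vanishing curvature for dimensional reasons, and I would define
\[
\eta([\mathcal{T}]) := \bigl[\tilde\tau,\, \mathrm{Hol}(\tilde\tau^{*}T)\bigr] \in \mathscr{L}_P^{\,\prime}|_{\tau}.
\]
Well-definedness comes in two flavours. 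First, for a second lift $\tilde\tau'$, the pair $(\tilde\tau,\tilde\tau')$ is a loop in $P^{[2]}$, and the pullback of $\kappa$ along it is a connection-preserving isomorphism of $A$-bundles over $S^{1}$; comparing holonomies yields $g(\tilde\tau,\tilde\tau')\cdot\mathrm{Hol}(\tilde\tau'^{*}T)=\mathrm{Hol}(\tilde\tau^{*}T)$, which is exactly the relation $\sim_g$. Second, a 2-isomorphism $\mathcal{T}\Rightarrow\mathcal{T}'$ is a connection-preserving map $T\to T'$, whose pullback along $\tilde\tau$ preserves holonomy. Smoothness of $\eta$ follows from local smooth sections of $LP\to LM$ (which exist by Lemma \ref{lem:looping}), and $A$-equivariance is immediate since tensoring $\mathcal{T}$ with the bundle of holonomy $a$ multiplies $\mathrm{Hol}(\tilde\tau^{*}T)$ by $a$.

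Connection-preservation is the main technical step. The parallel transport on $\mathscr{L}_P$ along a path $\gamma$ in $LM$ is given by the surface holonomy $\mathrm{Hol}_{\mathcal{G}_P}(\exd\gamma,\mathcal{T}_0,\mathcal{T}_1)$ via \erf{defpartrans}, whereas on $\mathscr{L}_P^{\,\prime}$ it is governed by the 1-form $\omega=LC_{\eta}$. I would lift $\gamma$ to a path $\tilde\gamma$ in $LP$, whose associated cylinder $\exd{\tilde\gamma}:[0,1]\times S^{1}\to P$ refines $\exd\gamma$ through the submersion $P\to M$, and apply the standard surface-holonomy formula for a bundle gerbe equipped with boundary trivializations: it evaluates to the exponential of $\int_{\exd{\tilde\gamma}}C_{\eta}$ corrected by the ratio $\mathrm{Hol}(\tilde\gamma(1)^{*}T_1)/\mathrm{Hol}(\tilde\gamma(0)^{*}T_0)$ of boundary holonomies. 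Splitting the integral over $[0,1]\times S^{1}$ via Fubini turns $\int_{\exd{\tilde\gamma}}C_{\eta}$ into $\int_{\gamma}LC_{\eta}$, and the resulting identity is exactly the compatibility of $\eta$ with the two parallel transports.

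Fusion-preservation then requires matching the characterization \erf{lambda} of the fusion product $\lambda$ on $\mathscr{L}_P$ with the map $f:Z\to A$ defining the product on $\mathscr{L}_P^{\,\prime}$. Given $(\gamma_1,\gamma_2,\gamma_3)\in PM^{[3]}$, lifts $\tilde\beta_{ij}\in LP$, and flat trivializations $\mathcal{T}_{ij}$, I would pull back the 2-isomorphisms $\phi_k$ over the paths $\gamma_k$ along the lifted pair-paths $\tilde\gamma_k$ in $P^{[2]}$; their endpoint compatibility at $x$ and $y$ then translates, via $\mu$ and the $A$-equivariance of parallel transport, into the defining formula for $f$. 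The main obstacle I expect is the connection-preservation step, because it forces one to unpack explicitly the surface holonomy of $\mathcal{G}_P$ with boundary trivializations on a lifted cover and reconcile it with the \v{C}ech-style transgressed 1-form; once this bookkeeping is in place, fusion-preservation follows by an analogous but shorter, essentially algebraic argument.
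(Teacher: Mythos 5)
Your proposal follows essentially the same route as the paper's proof: $\eta$ is defined by the holonomy of the trivializing bundle $T$ along a lift $\tilde\tau\in LP$, well-definedness follows from the cocycle relation induced by $\kappa$, connection-preservation is checked by trivializing $(\exd\gamma)^{*}\mathcal{G}_P$ via the lifted cylinder $\exd{\tilde\gamma}$ so that the surface holonomy becomes $\exp\bigl(\int_{\exd{\tilde\gamma}}C_{\eta}\bigr)$ and matches the transgressed 1-form $\omega=LC_{\eta}$, and fusion-preservation reduces to a parallel-transport computation with the 2-isomorphisms $\phi_k$. The only caveat is that the orientation of your boundary-holonomy ratio must be matched to the sign convention in \erf{defpartrans}, but this is bookkeeping rather than a gap.
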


\begin{proof}
We recall that a point in  $\mathscr{L}_P = \mathscr{T}_{\mathcal{G}_P}$ over a loop $\tau \in LM$ is a flat trivialization of $\tau^{*}\mathcal{G}_P$. In turn, this is a principal $A$-bundle $T$ over $\tau^{*}P$ with connection, whose holonomy we denote by $h: L(\tau^{*}P) \to A$. Further, there is an isomorphism (see \erf{triviso}) of principal bundles over $(\tau^{*}P)^{[2]}$ which guarantees
\begin{equation}
\label{fuck}
g \cdot L\pi_2^{*}h = L\pi_1^{*}h\text{,}
\end{equation}
where $g$ is the \v Cech cocycle used above. Now let $\tilde\tau\in LP$ be a lift of $\tau$, and let a loop $\beta_{\tilde\tau} \in L(\tau^{*}P)$ be defined by $\beta_{\tilde\tau}(z) := (z,\tilde\tau(z)) \in S^1 \lli{\tau} \times_{\pi}P$. Eq. \erf{fuck} shows that the class $(\tilde\tau,h(\beta_{\tilde\tau})) \in \mathscr{L}_P^{\,\prime}$ is independent of the choice of $\tilde\tau$. Since we can choose the lifts $\tilde\tau$ locally smooth, this defines a smooth map
\begin{equation*}
\eta: \mathscr{L}_P \to \mathscr{L}_P^{\,\prime}\text{.}
\end{equation*}
By construction, it respects the projections to $LM$ and is $A$-equivariant.

In order to see that $\eta$ is connection-preserving, we compare the parallel transports in $\mathscr{L}_P$ and $\mathscr{L}_P^{\,\prime}$ along a path $\gamma$ in $LM$. By Lemma \ref{lem:looping} we may choose a lift $\tilde\gamma$ to $LP$. Accordingly, the associated map $\exd\gamma: C \to M$ on the cylinder $C=[0,1]\times S^1$ lifts to a map $\exd{\tilde\gamma}: C \to P$. This lift defines a section into the submersion of the bundle gerbe $(\exd\gamma)^{*}\mathcal{G}_P$ over $C$, and any such section determines a trivialization $\mathcal{T}: (\exd\gamma)^{*}\mathcal{G}_P \to \mathcal{I}_{\rho}$, where $\rho = (\exd{\tilde\gamma})^{*}C_{\eta}$. \cite[Lemma 3.2.3]{waldorf10}. We use this trivialization to compute
\begin{equation*}
\mathrm{Hol}_{\mathcal{G}}(\exd\gamma,\mathcal{T}_0,\mathcal{T}_1) = \exp \left ( \int_{C} \rho \right )= \exp \left ( \int_{\exd{\tilde\gamma}} C_{\eta} \right ) =: a \in A\text{.}
\end{equation*}
Putting $\mathcal{T}_0 := \mathcal{T}|_{\gamma(0)}$ and $\mathcal{T}_1 := \mathcal{T}|_{\gamma(1)}$ we get for the parallel transport 
\begin{equation*}
\tau_{\gamma}: \mathscr{L}_P|_{\gamma(0)} \to \mathscr{L}_P|_{\gamma(1)}: \mathcal{T}_0 \mapsto \mathcal{T}_1 \cdot a\text{.}
\end{equation*}
Further, the loops $\beta_{\tilde\gamma(0)}$ and $\beta_{\tilde\gamma(1)}$ have trivial holonomy in the bundle $Q$, since they factor through the diagonal $P \to P^{[2]}$ over which $Q$ has a flat section. This shows that
$\eta(\mathcal{T}_0) := (\tilde\gamma(0),1)$
and
$\eta(\mathcal{T}_1) := (\tilde\gamma(1),1)$.
On the other side, the parallel transport in $\mathscr{L}_P^{\,\prime}$ is \begin{equation}
\label{pt2}
\tau_{\gamma}: \mathscr{L}_P^{\,\prime}|_{\gamma(0)} \to \mathscr{L}_P^{\,\prime}|_{\gamma(1)}
:
\eta(\mathcal{T}_0) \mapsto \eta(\mathcal{T}_1) \cdot \exp \left ( \int_{[0,1]} \tilde\gamma^{*}\omega  \right )\text{.}
\end{equation}
Looking at the definition of the connection $\omega$, the  integral in \erf{pt2} coincides with the constant $a$, and this shows that the isomorphism $\eta$ commutes with parallel transport.

In order to see that the isomorphism $\eta$ is fusion-preserving, let $(\gamma_1,\gamma_2,\gamma_3) \in PM^{[3]}$ with associated loops $\beta_{ij} := l(\gamma_i,\gamma_j)\in LM$ and lifts $\tilde\eta_{ij} \in LP$. We pick trivializations $\mathcal{T}_{ij}$ over $\beta_{ij}$ such that $\lambda(\mathcal{T}_{12},\mathcal{T}_{23}) = \mathcal{T}_{13}$, and denote their images under $\eta$ by $\eta(\mathcal{T}_{ij}) = (\tilde\beta_{ij},h_{ij})$. In this notation, the claim that $\eta$ is fusion-preserving boils down to the equality
\begin{equation*}
h_{12} \cdot h_{23} \cdot f(\tilde\beta_{12},\tilde\beta_{23},\tilde\beta_{13}) = h_{13}\text{.}
\end{equation*}
It can be verified by a tedious but straightforward computation with parallel transport in the bundles $T_{ij}$ that belong to the trivializations $\mathcal{T}_{ij}$, involving the 2-isomorphisms $\phi_k$ that make up the fusion product on $\mathscr{L}_P$ in the sense of Eq. \erf{lambda}. 
\end{proof}

\setsecnumdepth{1}

\section{Spin Structures and Loop Space Orientations}

\label{sec:spin}

In this section we discuss two applications of Theorem \ref{main}: spin structures and complex spin structures on an $n$-dimensional oriented Riemannian manifold $M$.

\subsection{First Example: Spin Structures}

We are concerned with the central extension
\begin{equation}
\label{spin}
\alxydim{}{1 \ar[r] & \Z_2 \ar[r] & \spin n \ar[r] & \so n \ar[r] & 1}
\end{equation}
and with the $\so n$-bundle $FM$ of orthonormal frames in an $n$-dimensional oriented Riemannian manifold $M$. A \emph{spin structure} on $M$ is precisely a $\spin n$-lift of $FM$. Since $\Z_2$ is discrete, all connections and differential forms disappear from the statement of Theorem \ref{main}, so that equivalence classes of spin structures are in bijection with fusion-preserving sections of $\mathscr{L}_{FM}$. 

Let us look closer at the bundle $\mathscr{L}_{FM}$ in its alternative formulation of from Section \ref{sec:alternative}.
Its total space is 
\begin{equation}
\label{lfm}
\mathscr{L}_{FM} = (LFM \times \Z_2)\;/\; L\so n
\end{equation}
where $L\so n$ acts on $\Z_2$ via the monodromy  $m: L\so n \to \Z_2$ of the spin extension \erf{spin}. This bundle can be seen as the \emph{orientation bundle} of the $LM$ for various reasons, of which two are:
\begin{enumerate}[(i)]
\item 
The $L\so n$-bundle $LFM$ over $LM$ plays the role of the frame bundle of $LM$. According to Eq. \erf{lfm}, our bundle $\mathscr{L}_{FM}$ is a $\Z_2$-reduction of that frame bundle, just like the orientation bundle of a finite-dimensional manifold, cf. \cite{mclaughlin1}. 

\item
Since the transgression functor $\mathscr{T}$ covers the ordinary transgression homomorphism \erf{transhom} on the level of cohomology, and the characteristic class of the lifting gerbe $\mathcal{G}_{FM}$ is the second Stiefel-Whitney class $w_2 \in \h^2(M,\Z_2)$, we see that the characteristic class of $\mathscr{L}_{FM}$ is the transgression of $w_2$ to $LM$ --- Atiyah has defined that class to be the obstruction against orientability of $LM$ \cite[the remark after Lemma 3]{atiyah2}. 
\end{enumerate}
In that respect, we  see the sections of $\mathscr{L}_{FM}$ as \emph{orientations} of the loop space $LM$.

The new information that enters this picture from the general point of view of this note (but also from a different perspective involving Clifford bimodules \cite{stolz3}) is that the orientation bundle $\mathscr{L}_{FM}$ comes with additional structure: a fusion product. The fusion product distinguishes a class of \emph{fusion-preserving orientations} of $LM$, and Theorem \ref{main} implies 

\begin{corollary}[{{see \cite[Theorem 9]{stolz3}}}]
\label{co:spin}
Let $M$ be a connected, oriented Riemannian manifold. Then, there is a canonical bijection
\begin{equation*}
\bigset{3.9cm}{Equivalence classes of spin structures on $M$} \cong \bigset{3.3cm}{Fusion-preserving orientations of $LM$}\text{.}
\end{equation*}
\end{corollary}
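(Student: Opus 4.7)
The plan is to obtain the corollary as a direct specialization of Theorem \ref{main} to the central extension \erf{spin}, the frame bundle $P = FM$ equipped with its Levi-Civita connection $\eta$, and the parameter $\rho = 0$. The proof then amounts to identifying the two sides of Theorem \ref{main} with the two sides of the corollary.

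I would begin by observing that the discrete group $\Z_2$ has vanishing Lie algebra $\mathfrak{a} = 0$. Consequently, every $\mathfrak{a}$-valued differential form that appears in Theorem \ref{main} is forced to be zero: the scalar curvature \erf{scurv} of any compatible connection vanishes identically, and on the loop-space side the curvature requirement $L\rho = 0$ on sections of $\mathscr{L}_{FM}$ is satisfied automatically because $\mathscr{L}_{FM}$ is a principal $\Z_2$-bundle. Thus the specialization $\rho = 0$ in Theorem \ref{main} loses no information.

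Next I would verify that the category $\struccon{\spin n}{0}{FM}$ is canonically equivalent to the category of ordinary spin structures on $M$. The key point is that the Lie algebra extension \erf{lace} degenerates to an isomorphism $t_{*}: \widehat{\mathfrak{so}(n)} \to \mathfrak{so}(n)$, so the compatibility condition $f^{*}\eta = t_{*}(\hat\eta)$ determines $\hat\eta$ uniquely from $\eta$. Equipping every $\spin n$-lift with this canonical compatible connection gives an equivalence $\struc{\spin n}{FM} \cong \struccon{\spin n}{0}{FM}$, and after passing to isomorphism classes the left-hand side of Theorem \ref{main} becomes the set of equivalence classes of spin structures on $M$.

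Finally, I would identify the right-hand side with fusion-preserving orientations of $LM$. The alternative description \erf{lfm} from Section \ref{sec:alternative} presents $\mathscr{L}_{FM}$ as the $\Z_2$-bundle associated to the loop frame bundle $LFM$ via the monodromy $m: L\so n \to \Z_2$ of \erf{spin}; as noted in item (i), this is exactly the orientation double cover of $LM$ in the sense of Atiyah and McLaughlin, so a section is literally an orientation of $LM$. Combining the three reductions with Theorem \ref{main} then yields the claimed canonical bijection. The only step requiring genuine checking is the second one --- that the apparent extra geometric data in a geometric $\spin n$-lift is in fact redundant in the discrete case --- and this is a short exercise in matching definitions; the remaining reductions are formal.
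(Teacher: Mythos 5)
Your proposal is correct and takes essentially the same route as the paper: specialize Theorem \ref{main} to the extension \erf{spin} with $P=FM$ and $\rho=0$, observe that discreteness of $\Z_2$ forces $\mathfrak{a}=0$ so that all connection data and $\mathfrak{a}$-valued forms become trivial on both sides, and identify $\mathscr{L}_{FM}$ with the orientation bundle via \erf{lfm}. The paper compresses your middle step into the single remark that \quot{all connections and differential forms disappear}; your unpacking of it (uniqueness of the compatible connection because $t_{*}$ is an isomorphism, and automatic vanishing of the section curvature) is exactly what that remark asserts.
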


The following example illuminates that it is important to distinguish between orientations and fusion-preserving orientations. Let $M = \mathscr{E}$ be the Enriques surface. Its second Stiefel-Whitney class is non-zero but transgresses to zero. In other words, $\mathscr{E}$ is not spin but $L\mathscr{E}$ is orientable. However, none of the orientations of $L\mathscr{E}$ is fusion-preserving.

\subsection{Second Example: Complex Spin Structures}

We recall that the group $\spinc n$ is the quotient of $\spin n \times \ueins$ by the diagonal $\Z_2$ subgroup, and fits into the central extension
\begin{equation}
\label{spinc}
\alxydim{}{1 \ar[r] & \ueins \ar[r] & \spinc n \ar[r] & \so n \ar[r] & 1\text{,}}
\end{equation}
whose arrows are the  induced by the obvious inclusion and projection, respectively. Accordingly, the Lie algebra $\hat{\mathfrak{g}}$ of $\spinc n$ is a direct sum of the Lie algebras $\mathfrak{g}$ of $\so n$ and $\R$ of $\ueins$. In particular, we have a canonical split of the Lie algebra extension, namely $\sigma(X) := (X,0)$, for which the cocycle $\omega$ and  the map $Z$ we have looked at in Section \ref{sec:lifting} are identically zero.

A \emph{complex spin structure} on an oriented Riemannian manifold  $M$ is  a $\spinc n$-lift $\hat {FM}$ of the frame bundle $FM$. A \emph{spin connection} is a  connection $\hat\eta$ on $\hat {FM}$ compatible with the Levi-Cevita connection on $FM$, and the pair of the two is called a \emph{geometric complex spin structure} on $M$. 

With the above choice of the split $\sigma$, one can take the trivial reduction $r \equiv 0$. The only non-zero quantity is the associated map $r_{\sigma}$ that is used to determine the scalar curvature \erf{scurv} of a spin connection: it is $r_{\sigma}(\hat p, (X,x)) = x$ for $(X,x) \in \mathfrak{g} \oplus \R$ and all $\hat p \in \hat{FM}$. Since $\sigma$ and $r$ are canonically given,  the scalar curvature of a spin connection as well as the principal $\ueins$-bundle $\mathscr{L}_{FM}$ over $LM$ are also independent of any choices.

\begin{remark}
\label{re:scurv}
There is a nice interpretation of the scalar curvature of a spin connection. It is well-known that a complex spin structure $\hat{FM}$ is the same as a principal $\ueins$-bundle $L$ over $M$ together with a $\spinc n$-structure on $FM \times L$, where now $\spinc n$ is viewed as a central extension of $\so n \times S^1$ by $\Z_2$. This correspondence also works in a setup with connections: a spin connection $\hat\eta$ is the same as  a connection on $L$. The curvature of this connection is a 2-form on $M$ and is twice  the scalar curvature of $\hat\eta$. 
\end{remark}

\begin{comment}
The principal $\ueins$-bundle $L$ is given by the formula
\begin{equation*}
L := \hat {FM} \times_{\spinc n} S^1\text{,}
\end{equation*}
where $\spinc n$ acts on $S^1$ by multiplication along the map $\ell : \spinc n \to S^1 : [g,z] \mapsto z^2$. The connection is induced by
\begin{equation*}
\omega := \ell_{*}(\mathrm{pr}_1^{*}\hat\eta) + \mathrm{pr}_2^{*}\theta \in \Omega^1(\hat P \times S^1)\text{.}
\end{equation*}
Notice that $\ell_{*}(X,x) = 2x$. Thus,
\begin{equation*}
\mathrm{scurv}(\hat\eta) = r_{\sigma}(\mathrm{curv}(\hat\eta)) = \frac{1}{2}\ell_{*}(\hat\eta)=\frac{1}{2}\mathrm{curv}(\omega)\text{.}
\end{equation*}
\end{comment}

Returning to the principal $\ueins$-bundle $\mathscr{L}_{FM}$ over $LM$, we see from the alternative construction of Section \ref{sec:alternative} that its total space is
\begin{equation*}
\mathscr{L}_{FM} = (LFM \times \ueins) \;/\; L\so n\text{,}
\end{equation*}
where $L\so n$ acts on $\ueins$ by the holonomy $\mathrm{Hol}_{\nu}$ of the canonical (flat) connection $\nu$ on the underlying $\ueins$-bundle of the central extension \erf{spinc}. Recall that the holonomy of a flat connection is locally constant; in fact one can check that it is the composition of the monodromy $m: L\so n \to \Z_2$ of the spin extension \erf{spin} with the inclusion $\Z_2 \subset \ueins$. 
\begin{comment}
Indeed, for a loop $\tau\in L\so n$ and a lift $\gamma: [0,1] \to \spin n$ we have a commutative diagram
\begin{equation*}
\alxydim{}{[0,1] \ar[d] \ar[r]^-{\gamma} & \spin n \ar[r]^-{\iota} \ar[d]^{\lambda} & \spinc n \ar[d]^{p} \\ S^1 \ar[r]_-{\tau} & \so n \ar@{=}[r] & \so n}\text{.}
\end{equation*}
So we have
\begin{equation*}
\mathrm{Hol}_{\nu}(\tau) = m(\tau) \cdot \exp \left ( \int_{(\iota \circ \gamma)} \nu \right )\text{.}
\end{equation*}
But the integral vanishes:
\begin{equation*}
\int_{\iota \circ \gamma}\nu = \int_{\gamma} \left ( \iota_{*}(\theta_{\spin n}) - \sigma(\lambda_{*}(\theta_{\spin n})) \right ) = \int_{\gamma} 0 = 0\text{.}
\end{equation*}
\end{comment}
This means that our bundle $\mathscr{L}_{FM}$ is  the \emph{extension} of the orientation bundle of $LM$ along the inclusion $\Z_2 \subset \ueins$. We will thus call it the \emph{complex orientation bundle} of $LM$, and call its sections \emph{complex orientations}. 

\begin{comment}
Let us also look at the connection on $\mathscr{L}_{FM}$. Since the curving $C_{\eta}$ from \erf{curvingform} vanishes, this connection is induced from the 1-form $\omega = 0$ on $LFM$. The actual connection 1-form on $\mathscr{L}_{FM}$ is thus simply given by the Maurer-Cartan form of $\ueins$, in particular, it is flat.  
\end{comment}

Summarizing, Theorem \ref{main} implies

\begin{corollary}
\label{co:spinc}
Let $M$ be a connected, oriented Riemannian manifold and $\rho \in \Omega^2(M)$. Then, there is a canonical bijection
\begin{equation*}
\bigset{5.4cm}{Equivalence classes of geometric complex spin structures on $M$ with scalar curvature $\rho$} \cong 
\bigset{4.1cm}{Fusion-preserving complex orientations of $LM$ with curvature $-L\rho$}\text{.}
\end{equation*}
\end{corollary}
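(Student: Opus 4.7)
The plan is to obtain Corollary \ref{co:spinc} as a direct specialization of Theorem \ref{main} to the central extension \erf{spinc}, taking $P = FM$ endowed with the Levi-Civita connection $\eta$ and using the canonical choices of split and reduction. The argument then reduces to three identifications between the data of Theorem \ref{main} and the classical objects appearing in the statement of the corollary.

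First I would observe that the extension \erf{spinc} carries a canonical split $\sigma(X) := (X,0)$ of \erf{lace} under the decomposition $\hat{\mathfrak{g}} \cong \mathfrak{g} \oplus \R$, which causes both the cocycle $\omega$ and the map $Z$ from Section \ref{sec:lifting} to vanish identically. The trivial reduction $r \equiv 0$ then automatically satisfies \erf{idr}, so all of Gomi's data become canonical; in particular the curving form \erf{curvingform} is identically zero, and \erf{scurv} reduces to extracting the $\R$-component of $\mathrm{curv}(\hat\eta)$. By Remark \ref{re:scurv} this agrees, up to the customary factor of two, with the ordinary curvature of the auxiliary $\ueins$-bundle $L$ associated to a complex spin structure, so that a geometric $\spinc n$-lift of $(FM,\eta)$ with scalar curvature $\rho$ is the same thing as a geometric complex spin structure on $M$ with scalar curvature $\rho$ in the classical sense.

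Second, I would invoke the description given immediately before the statement of the corollary, which uses the alternative construction of Section \ref{sec:alternative} to identify $\mathscr{L}_{FM}$ with $(LFM \times \ueins)/L\so n$, the action being through the flat holonomy of the canonical connection on $\hat G \to G$. Since that holonomy factors as $L\so n \xrightarrow{m} \Z_2 \hookrightarrow \ueins$, the bundle $\mathscr{L}_{FM}$ is precisely the $\ueins$-extension of the $\Z_2$-orientation bundle from Corollary \ref{co:spin} --- that is, the complex orientation bundle of $LM$. In particular, sections of $\mathscr{L}_{FM}$ are complex orientations, and the fusion product transported by Theorem \ref{prop:transgression} to $\mathscr{L}_{FM}$ is the one that defines fusion-preservation of complex orientations.

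With these two identifications in place, together with the tautological matching of the "curvature $L\rho$" condition via Lemma \ref{lem:trivtrans}, the claimed bijection becomes a direct restatement of Theorem \ref{main}. The only point requiring genuine care, and hence the main obstacle, is the compatibility of conventions in the first step: one must check that the $\mathfrak{a}$-valued scalar curvature produced by Gomi's prescription for the canonical split and trivial reduction matches the classical scalar curvature of the $\ueins$-bundle $L$ --- a routine but not entirely trivial calculation in Lie-algebra valued forms, essentially the content of Remark \ref{re:scurv}.
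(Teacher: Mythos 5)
Your proposal is correct and follows essentially the same route as the paper: specialize Theorem \ref{main} to the extension \erf{spinc} with the canonical split $\sigma(X)=(X,0)$ and trivial reduction $r\equiv 0$, then identify geometric $\spinc n$-lifts with geometric complex spin structures and $\mathscr{L}_{FM}$ with the complex orientation bundle via the alternative construction. The only difference is that you treat the comparison with a \quot{classical} scalar curvature as the main obstacle, whereas the paper simply takes \erf{scurv} (with the canonical choices) as the definition of the scalar curvature of a spin connection, relegating the relation to the curvature of the auxiliary $\ueins$-bundle to the purely interpretive Remark \ref{re:scurv}.
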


\kobib{../../bibliothek/tex}


\begin{thebibliography}{CJM02}
\addcontentsline{toc}{section}{\refname}

\bibitem[Ati85]{atiyah2}
M.~F. Atiyah, \quot{Circular symmetry and stationary phase approximation}.
\newblock {\em Ast\'erisque}, 131:43--59, 1985.
\bibitem[BM94]{brylinski4}
J.-L. Brylinski and D.~A. McLaughlin, \quot{The geometry of degree four
  characteristic classes and of line bundles on loop spaces {I}}.
\newblock {\em Duke Math. J.}, 75(3):603--638, 1994.
\bibitem[BM96]{brylinski5}
J.-L. Brylinski and D.~A. McLaughlin, \quot{The geometry of degree four
  characteristic classes and of line bundles on loop spaces {II}}.
\newblock {\em Duke Math. J.}, 83(1):105--139, 1996.
\bibitem[Bry93]{brylinski1}
J.-L. Brylinski, {\em Loop spaces, characteristic classes and geometric
  quantization}, volume 107 of {\em Progr. Math.}
\newblock Birkh\"auser, 1993.
\bibitem[CJM02]{carey2}
A.~L. Carey, S.~Johnson, and M.~K. Murray, \quot{Holonomy on {D}-branes}.
\newblock {\em J. Geom. Phys.}, 52(2):186--216, 2002.
\newblock \kobiburl{http://arxiv.org/abs/hep-th/0204199}
\bibitem[CW08]{carey6}
A.~L. Carey and B.-L. Wang, \quot{Fusion of symmetric D-Branes and Verlinde
  Rings}.
\newblock {\em Commun. Math. Phys.}, 277(3):577--625, 2008.
\newblock \kobiburl{http://arxiv.org/abs/math-ph/0505040}
\bibitem[Gom03]{gomi3}
K.~Gomi, \quot{Connections and curvings on lifting bundle gerbes}.
\newblock {\em J. Lond. Math. Soc.}, 67(2):510--526, 2003.
\newblock \kobiburl{http://arxiv.org/abs/math/0107175}
\bibitem[McL92]{mclaughlin1}
D.~A. McLaughlin, \quot{Orientation and string structures on loop space}.
\newblock {\em Pacific J. Math.}, 155(1):143--156, 1992.
\bibitem[Mur96]{murray}
M.~K. Murray, \quot{Bundle gerbes}.
\newblock {\em J. Lond. Math. Soc.}, 54:403--416, 1996.
\newblock \kobiburl{http://arxiv.org/abs/dg-ga/9407015}
\bibitem[Mur10]{murray3}
M.~K. Murray, \quot{An introduction to bundle gerbes}.
\newblock In O.~Garcia-Prada, J.~P. Bourguignon, and S.~Salamon, editors, {\em
  The many facets of geometry. A tribute to {N}igel {H}itchin}. Oxford Univ.
  Press, 2010.
\newblock \kobiburl{http://arxiv.org/abs/0712.1651}
\bibitem[ST]{stolz3}
S.~Stolz and P.~Teichner, \quot{The spinor bundle on loop spaces}.
\newblock Preprint.
\newblock
  \kobiburl{http://people.mpim-bonn.mpg.de/teichner/Math/Surveys_files/MPI.pdf}
\bibitem[Ste00]{stevenson1}
D.~Stevenson, {\em The geometry of bundle gerbes}.
\newblock PhD thesis, University of Adelaide, 2000.
\newblock \kobiburl{http://arxiv.org/abs/math.DG/0004117}
\bibitem[SW07]{spera1}
M.~Spera and T.~Wurzbacher, \quot{Twistor spaces and spinors over loop spaces}.
\newblock {\em Math. Ann.}, 338:801--843, 2007.
\bibitem[SW09]{schreiber3}
U.~Schreiber and K.~Waldorf, \quot{Parallel transport and functors}.
\newblock {\em J. Homotopy Relat. Struct.}, 4:187--244, 2009.
\newblock \kobiburl{http://arxiv.org/abs/0705.0452v2}
\bibitem[SW10]{schweigert2}
C.~Schweigert and K.~Waldorf, \quot{Gerbes and {L}ie groups}.
\newblock In K.-H. Neeb and A.~Pianzola, editors, {\em Developments and trends
  in infinite-dimensional {L}ie theory}, volume 600 of {\em Progr. Math.}
  Birkh\"auser, 2010.
\newblock \kobiburl{http://arxiv.org/abs/0710.5467}
\bibitem[Wal]{waldorf10}
K.~Waldorf, \quot{Transgression to loop spaces and its inverse, {II}: Gerbes
  and fusion bundles with connection}.
\newblock {\em Asian J. Math.}, to appear.
\newblock \kobiburl{http://arxiv.org/abs/1004.0031}
\bibitem[Wal07]{waldorf1}
K.~Waldorf, \quot{More morphisms between bundle gerbes}.
\newblock {\em Theory Appl. Categ.}, 18(9):240--273, 2007.
\newblock \kobiburl{http://arxiv.org/abs/math.CT/0702652}
\bibitem[Wal10]{waldorf5}
K.~Waldorf, \quot{Multiplicative bundle gerbes with connection}.
\newblock {\em Differential Geom. Appl.}, 28(3):313--340, 2010.
\newblock \kobiburl{http://arxiv.org/abs/0804.4835v4}
\bibitem[Wal12]{waldorf9}
K.~Waldorf, \quot{Transgression to loop spaces and its inverse, {I}:
  Diffeological bundles and fusion maps}.
\newblock {\em Cah. Topol. G\'eom. Diff\'er. Cat\'eg.}, LIII:162--210, 2012.
\newblock \kobiburl{http://arxiv.org/abs/0911.3212}
\end{thebibliography}
\end{document}